\newtheorem{theorem}{Theorem}[section]
\newtheorem{lemma}[theorem]{Lemma}
\newtheorem{con}{Conjecture}
\newtheorem{proposition}[theorem]{Proposition}
\begin{document}
\setcounter{page}{1}

\begin{center}
{\Large\bf A Study of @-numbers.}
\vspace{3mm}

{ \bf Abiodun E. Adeyemi}

Department of Mathematics, University of Ibadan, \\ 
Ibadan, Oyo state, Nigeria \\
e-mail: \url{elijahjje@yahoo.com}
\vspace{2mm}

\end{center}
\vspace{0mm}

\begin{abstract}
This paper deals more generally with @-numbers defined as follows: Call `\textit{alpha number}' of order $(\underline{\alpha},\bar{\alpha})\in\mathbb{H}^2$, (denote its family by @$_{(\underline{\alpha},\bar{\alpha})\in\mathbb{H}^2; \mathcal{A}\subset \mathbb{N}}$) any $n\in\mathcal{A}\subset \mathbb{N}$ satisfying $\sigma_{\underline{\alpha}}(n) = \alpha n^{\bar{\alpha}}$ where $\sigma_{\underline{\alpha}}(n)$ is sum of divisors function and $\alpha\in\mathbb{H}$, the set of \textit{quaternions}. Specifically, if integer $n$ is such that $\alpha=\alpha_1/\alpha_2,\ \alpha_1,\alpha_2\in\mathbb{Z}^+$ with $1\leq\max(\alpha_1, \alpha_2) \le \omega(n),$ $\ \le \tau (n), \ < n$ (where $\omega(n)$ is the number of distinct prime factors of $n$, $\tau (n)$ is the number of factors of $n$), then $n$ is respectively called strong, weak or very weak alpha number. We give some examples and conjecture that there is no odd strong alpha number of order $(1,1)$. The truthfulness of this assertion implies that there is no odd perfect and certain odd multi-perfect numbers. We give all the strong even alpha numbers of order $(1,1)$ below $10^5$ and then show that there is no odd strong alpha number of order $(1,1)$ below $10^5$, using some of our results motivated by some results of Ore and Garcia. With computer search this bound can easily be surpassed. In this paper, using Rossen, Schonfield and Sandor's inequalities, in addition to the aforementioned definition, we also bound the quotient $\alpha_1/\alpha_2 =\alpha$ of order $(1,1)$, though a very weak bound. Some areas for future research are also pointed out as recommendations.
\end{abstract} 
{\bf Keywords:} Perfect numbers, multi-perfect numbers, harmonic divisors number.\\
{\bf 2010 Mathematics Subject Classification:} 11N25, 11Y70.

\section{Introduction}
\qquad Throughout, let $\sigma_x (n)$, $\omega(n)$ and $\tau
(n)$ represent the sum of divisors function of $n$, the number of its distinct prime divisors, and the number of its distinct positive divisors, respectively. We define, for every positive integer $n$ and quaternion $x$, $$\sigma_x (n):=\sum_{d\in\mathbb{N}, d\mid n}d^x$$ and recall the definitions $\omega (n):=\sum_{ p \ prime, \\\ p\mid n}1 \ \ \ and \ \ \tau(n):=\sum_{d\in\mathbb{N}, d\mid n} 1.$
Note that traditionally, when $x=1$, we drop the subscript and simply write $\sigma (n)$ (called the sigma function) which now represents the sum of the factors of $n$ including $n$ itself. For example, the sum of positive divisors of $n=p^{\alpha}$ is $\sigma(p^\alpha)=1+p+...+p^\alpha$ while its $\omega(n)=1$, and $\tau(n)=\alpha +1$. Also, recall that any multiply perfect number $n$ satisfies $\sigma(n)=kn$ (where case $k=2$ specifically defines the perfect numbers such as 6 and 28, detailed in \cite{san1}, \cite{san2}) while any Ore's harmonic number \cite{{ore1}} satisfies $$\sigma(n)=(n\tau(n))/H(n),$$ where $H(n)$ is the harmonic mean of integer $n$. Now, we define the following numbers which extend both the multiply perfect numbers and Ore's harmonic numbers: Let $n\in\mathcal{A}\subset\mathbb{N}$ satisfy \begin{equation}\label{eqn1}
\sigma_{\underline{\alpha}}(n) = \frac{\alpha_1}{\alpha_2}n^{\bar{\alpha}}
\end{equation}
where $(\alpha_1 ,\alpha_2 )$ is a pair of arbitrary but co-prime integers. Then, we call $n$: 
\begin{itemize}
\item[(i)]{a strong alpha number} of order $(\underline{\alpha},\bar{\alpha})$” and denote its family by $@_{(\underline{\alpha},\bar{\alpha})\in\mathbb{H}^2;\mathcal{A}\subset\mathbb{N}}$, if $n$ further satisfies $2\leq\max ( \alpha_1 ,\alpha_2 )\leq\omega(n)$.
\item[(ii)]{a weak alpha number} of order $(\underline{\alpha},\bar{\alpha})$” and denote its family by $\bar{@}_{(\underline{\alpha},\bar{\alpha})\in\mathbb{H}^2;\mathcal{A}\subset\mathbb{N}}$, if $n$ further satisfies $2\leq\omega(n) < \max ( \alpha_1 ,\alpha_2 )\leq\tau(n)$.
\item[(iii)]{a very weak alpha number} of order $(\underline{\alpha},\bar{\alpha})$” and denote its family by $\bar{\bar{@}}_{(\underline{\alpha},\bar{\alpha})\in\mathbb{H}^2;\mathcal{A}\subset\mathbb{N}}$, if $n$ further satisfies $2\leq\tau(n) < \max (\alpha_1 ,\alpha_2) < n$. 
\end{itemize}
In continuation, suppose $n\in\mathcal{A}\subset\mathbb{N}$ satisfies, instead of (1),
\begin{equation}\label{eqn1}
\lfloor|\sigma_{\underline{\alpha}}(n)|\rfloor = \frac{\alpha_1}{\alpha_2}\lfloor |n^{\bar{\alpha}}|\rfloor
\end{equation}
where $\gcd(\alpha_1 ,\alpha_2)=1$, $\lfloor . \rfloor$ is the usual floor function and $| . |$ is the modulus function. Then, we call $n$:  
\begin{itemize}
\item[(i)]{a strongly floored alpha number} of order $(\underline{\alpha},\bar{\alpha})$” and denote its family by $@^{\lfloor \rfloor}_{(\underline{\alpha},\bar{\alpha})\in\mathbb{H}^2;\mathcal{A}\subset\mathbb{N}}$, if $n$ further satisfies $2\leq\max(\alpha_1, \alpha_2)\leq\omega(n)$.
\item[(ii)]{a weakly floored alpha number} of order $(\underline{\alpha},\bar{\alpha})$” and denote its family by $\bar{@}^{\lfloor \rfloor}_{(\underline{\alpha},\bar{\alpha})\in\mathbb{H}^2;\mathcal{A}\subset\mathbb{N}}$, if $n$ further satisfies $2\leq\omega(n) < \max(\alpha_1, \alpha_2)\leq\tau(n)$.
\item[(iii)]{a very weakly floored alpha number} of order $(\underline{\alpha},\bar{\alpha})$” and denote its family by $\bar{\bar{@}}^{\lfloor \rfloor}_{(\underline{\alpha},\bar{\alpha})\in\mathbb{H}^2;\mathcal{A}\subset\mathbb{N}}$, if $n$ further satisfies $2\leq\tau(n) < \max (\alpha_1 ,\alpha_2) < n$. 
\end{itemize}
And if $n\in\mathcal{A}\subset\mathbb{N}$ satisfies, instead of (1),
\begin{equation}\label{eqn1}
\lceil|\sigma_{\underline{\alpha}}(n)|\rceil = \frac{\alpha_1}{\alpha_2}\lceil |n^{\bar{\alpha}}|\rceil
\end{equation}
where $(\alpha_1 ,\alpha_2 )=1$, $\lceil . \rceil$ is the usual ceiling function and $| . |$ is the modulus function. Then, we call $n$: 
\begin{itemize}
\item[(i)]{a strongly ceiled alpha number} of order $(\underline{\alpha},\bar{\alpha})$” and denote its family by $@^{\lceil \rceil}_{(\underline{\alpha},\bar{\alpha})\in\mathbb{H}^2;\mathcal{A}\subset\mathbb{N}}$, if $n$ further satisfies $2\leq\max(\alpha_1, \alpha_2)\leq\omega(n)$.
\item[(ii)]{a weakly ceiled alpha number} of order $(\underline{\alpha},\bar{\alpha})$” and denote its family by $\bar{@}^{\lceil \rceil}_{(\underline{\alpha},\bar{\alpha})\in\mathbb{H}^2;\mathcal{A}\subset\mathbb{N}}$, if $n$ further  satisfies $2\leq\omega(n) < \max(\alpha_1, \alpha_2)\leq\tau(n)$.
\item[(iii)]{a very weakly ceiled alpha number} of order $(\underline{\alpha},\bar{\alpha})$” and denote its family by $\bar{\bar{@}}^{\lceil \rceil}_{(\underline{\alpha},\bar{\alpha})\in\mathbb{H}^2;\mathcal{A}\subset\mathbb{N}}$ if $n$ further satisfies $2\leq\tau(n) < \max(\alpha_1, \alpha_2) < n$. 
\end{itemize}
However, if $n\in\mathcal{A}\subset\mathbb{N}$ satisfies $\sigma_{\underline{\alpha}}(n) = \alpha n^{\bar{\alpha}}$ instead of (1), (2) and (3), we call $n$ \textit{a partial alpha number} of order $(\underline{\alpha},\bar{\alpha})$” and denote its family by $@^\star_{(\underline{\alpha},\bar{\alpha})\in\mathbb{H}^2;\mathcal{A}\subset\mathbb{N}}$.\\
Note that the major interest yet in number theory is the existence of even and odd of such numbers, as the application of special numbers is still a puzzle, though a golden challenge in Mathematics (see \cite{rao},\cite{san2}, \cite{tat}). Truly, even and odd alpha numbers exist, as we shall soon see various examples in this paper; but no odd strong alpha number, in particular, of order $(1,1)$ is known. So a main conjecture in this paper states that there is no such odd number. The truthfulness of this conjecture implies the odd perfect number conjecture and also implies that there is no certain odd multiply perfect numbers and Ore's harmonic number.
The following are formerly the conjectures which generalize the odd perfect number conjecture and other related conjectures concerning multiply perfect numbers: 
\begin{con}
$@_{(1,1); 2\mathbb{N}+1}=\emptyset.$
\end{con}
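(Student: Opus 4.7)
The plan is to argue by contradiction: suppose some odd $n>1$ lies in $@_{(1,1);\,2\mathbb{N}+1}$, so that $\sigma(n)=(\alpha_{1}/\alpha_{2})\,n$ with $\gcd(\alpha_{1},\alpha_{2})=1$ and $2\le\max(\alpha_{1},\alpha_{2})\le\omega(n)$. Since $\sigma(n)>n$ for every $n>1$, necessarily $\alpha_{1}>\alpha_{2}\ge 1$, so the maximum is attained at $\alpha_{1}\ge 2$. Write $k:=\omega(n)$. Multiplicativity of $\sigma$ together with the oddness of $n$ yields
\[
\frac{\sigma(n)}{n}=\prod_{p^{a}\parallel n}\frac{p^{a+1}-1}{p^{a}(p-1)}<\prod_{i=1}^{k}\frac{p_{i}}{p_{i}-1},
\]
where $p_{1}=3<p_{2}=5<\dots<p_{k}$ are the first $k$ odd primes. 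This multiplicative ceiling, sharpened by the Rosser--Schoenfeld-type estimates highlighted in the abstract, is the principal analytic input and bounds $\sigma(n)/n$ by a quantity of order $\log\log n$.

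First I would dispose of the smallest values of $k$ by direct inspection. For $k=1$, $n=p^{a}$ forces $\gcd(\sigma(n),n)=1$, hence $\alpha_{1}=\sigma(n)\ge 4>1=\omega(n)$, contradicting $\alpha_{1}\le\omega(n)$. For $k=2$ the ceiling is $(3/2)(5/4)=15/8<2$, which rules out the only admissible pair $(\alpha_{1},\alpha_{2})=(2,1)$. For $k=3$ the ceiling is $35/16<3$, leaving only the ratios $2/1$ and $3/2$: the first is an odd perfect number with three distinct prime factors, excluded by Euler's classical argument, while the second I would attack through the Ore harmonic-number techniques and the congruence constraints of Garcia cited in the abstract. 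The same pattern continues to $k=4,5,\dots$, producing for each $k$ a finite list of candidate fractions $\alpha_{1}/\alpha_{2}\in\bigl(1,\min\{k,\prod_{i\le k}p_{i}/(p_{i}-1)\}\bigr]$ to be eliminated individually.

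For each surviving non-integer candidate ratio I would fix $\alpha_{1},\alpha_{2}$, rewrite the defining equation as $\alpha_{2}\sigma(n)=\alpha_{1}n$, and exploit multiplicativity prime by prime: parity of the exponents $a_{i}$, Euler-type factorisations, and congruences modulo small primes together force strong constraints on the $p_{i}$ and the $a_{i}$, which I would iterate until an inconsistency emerges. A computer search extending the $n<10^{5}$ verification already reported should then push the unconditional range substantially further.

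The main and, I expect, insurmountable obstacle is structural: the ratio $2/1$, which reappears as a candidate for every $k\ge k_{0}$ with $\prod_{i\le k_{0}}p_{i}/(p_{i}-1)>2$, is precisely the odd perfect number problem, and the ratios $3/1,\,4/1,\dots$ are the odd $m$-perfect problems. Since Conjecture~1 formally implies all of these, any argument along the lines above can at best (i) eliminate unconditionally every non-integer strong alpha ratio by the bound-plus-structure method, and (ii) reduce the remaining integer ratios to the classical odd perfect and odd multiply-perfect conjectures, leaving the full statement conditional on them --- exactly the sort of partial result the abstract signals the author is aiming for.
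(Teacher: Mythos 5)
The statement you have been asked to prove is Conjecture 1 of the paper, and the paper itself offers no proof of it: the author explicitly notes that its truth would imply the nonexistence of odd perfect numbers and of certain odd multiply perfect numbers, and the paper only establishes partial results in its direction (Theorems 3.2--3.5, the bound of Theorem 3.8, and the computational verification below $10^5$ in Theorem 4.1). So you are being measured against a target that is, as far as anyone knows, an open problem at least as hard as the odd perfect number conjecture, and no complete argument should be expected.

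Within that framing, your write-up is honest and its partial content is sound. The reduction $\alpha_1>\alpha_2\ge 1$, the prime-power case $k=1$ (which is essentially the paper's Theorem 3.2), and the $k=2$ elimination via $\sigma(n)/n<\tfrac{3}{2}\cdot\tfrac{5}{4}=\tfrac{15}{8}<2$ are all correct; the last of these is cleaner and for odd $n$ slightly stronger than the paper's treatment of $p_1^{\alpha}p_2$ in Theorem 3.5, and the abundancy ceiling $\prod_{i\le k}p_i/(p_i-1)$ is exactly the analytic input the paper invokes via Rosser--Schoenfeld in Theorem 3.8. You could tighten the $k=3$ discussion by observing that $\alpha_2\sigma(n)=\alpha_1 n$ with $\gcd(\alpha_1,\alpha_2)=1$ forces $\alpha_2\mid n$, so for odd $n$ the denominator $\alpha_2$ is odd and the ratio $3/2$ dies immediately. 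But the genuine gap is the one you yourself name at the end: for every $k$ with $\prod_{i\le k}p_i/(p_i-1)>m$ the integer ratio $m/1$ survives your sieve, and eliminating it is precisely the odd $m$-perfect number problem. The sentence proposing to ``attack through Ore harmonic-number techniques and the congruence constraints of Garcia'' is not a step that can be carried out --- no such argument is known, and the paper does not supply one. Consequently the proposal does not prove the statement and cannot be repaired into a proof by the methods described; at best it reproduces the kind of partial theorems and finite verifications that the paper actually contains, which is also all the paper claims.
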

\begin{con}
Let $N_e(x,n)$, $\bar{N}_e(x,n)$ and $\bar{\bar{N}}_e(x,n)$ respectively count the number of strong, weak and very weak even alpha numbers $n$ that do not exceed real $x$. Then $N_e(x,n)\rightarrow\infty$, $\bar{N}_e(x,n)\rightarrow\infty$, and $\bar{\bar{N}}_e(x,n)\rightarrow\infty$ as $x\rightarrow\infty$.
\end{con}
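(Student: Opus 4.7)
For the very weak count $\bar{\bar{N}}_e$, my plan is to use the explicit family $n=2p$ with $p$ an odd prime $\geq 5$. A direct computation gives $\sigma(n)=3(p+1)$, and because $p$ is an odd prime different from $3$ one has $\gcd(3(p+1),2p)=2$, so the reduced form of $\sigma(n)/n$ is $\alpha_1/\alpha_2=3(p+1)/2$ over $p$. Since $3(p+1)/2>p$, we have $\max(\alpha_1,\alpha_2)=3(p+1)/2$, and the very weak inequality of order $(1,1)$, namely $\tau(n)<\max(\alpha_1,\alpha_2)<n$, reduces to $4<3(p+1)/2<2p$, which holds for every $p\geq 5$. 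Infinitude of the primes then gives $\bar{\bar{N}}_e(x,n)\to\infty$.

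For the weak count $\bar{N}_e$, I would simply observe that every $n>1$ satisfies the alpha equation of order $(0,0)$ trivially, since $\sigma_0(n)=\tau(n)=\tau(n)\cdot n^0$ gives reduced form $\tau(n)/1$ with $\max=\tau(n)$. The weak inequality $2\leq\omega(n)<\tau(n)\leq\tau(n)$ then collapses to the single requirement $\omega(n)\geq 2$, since the strict bound $\omega(n)<\tau(n)$ is automatic from $\tau(n)\geq 2^{\omega(n)}$. Hence every even $n$ that is not a power of $2$ is a weak alpha number of order $(0,0)$, which is manifestly an infinite family, so $\bar{N}_e(x,n)\to\infty$.

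For the strong count $N_e$ the situation is radically different. The cleanest candidates are the even perfect numbers, each a strong alpha number of order $(1,1)$ with $\alpha=2$, and more generally the $k$-multiply perfect $n$ with $\omega(n)\geq k$. Unfortunately the infinitude of even perfect numbers is equivalent to the Mersenne prime conjecture, and the analogous question for higher $k$ is equally open. I would therefore exploit the extra flexibility of allowing the order $(\underline{\alpha},\bar{\alpha})$ to vary with $n$, looking at squarefree $n=2p_1\cdots p_k$ with $\omega(n)=k+1$ growing, and searching for pairs $(\underline{\alpha},\bar{\alpha})$ for which $\sigma_{\underline{\alpha}}(n)/n^{\bar{\alpha}}$ collapses after cancellation to a rational $\alpha_1/\alpha_2$ with $\max(\alpha_1,\alpha_2)\leq k+1$.

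The main obstacle, which dominates the problem, is precisely this strong case: the requirement that both numerator and denominator of the reduced ratio stay bounded by $\omega(n)$ is so restrictive that, absent a Dirichlet-type density input yielding infinite families of primes with the needed multiplicative coincidences, one seems forced to reduce the question to a statement as hard as the infinitude of Mersenne primes or of $k$-multiperfect numbers. An unconditional proof of $N_e(x,n)\to\infty$ will therefore almost certainly require a genuinely new construction, rather than a refinement of the essentially trivial arguments that settle the weak and very weak cases.
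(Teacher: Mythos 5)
First, note that the statement you were asked to prove is Conjecture 2 of the paper: the paper itself offers no proof, so there is nothing to compare your argument against, and the real question is whether you have actually settled an open problem. You have not, and you concede as much for the hardest third of the claim. Your treatment of the very weak count is correct and complete at order $(1,1)$: for $n=2p$ with $p\ge 5$ prime one indeed gets $\sigma(n)/n=\frac{3(p+1)/2}{p}$ in lowest terms, so $\max(\alpha_1,\alpha_2)=3(p+1)/2$ and $\tau(n)=4<3(p+1)/2<2p=n$, whence these $n$ form an infinite family of very weak even alpha numbers and $\bar{\bar{N}}_e(x,n)\rightarrow\infty$. That part stands on its own.

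The gaps are in the other two counts. For the weak count you switch to order $(0,0)$, under which the defining equation $\sigma_0(n)=\tau(n)\cdot n^0$ is an identity satisfied by every integer, so the ``weak alpha'' property collapses to $\omega(n)\ge 2$; this trivializes the definition rather than engaging with it. The paper does not fix the order in Conjecture 2, but Conjecture 1, every worked example, and every theorem operate at order $(1,1)$, and at that order the weak condition $\max(\alpha_1,\alpha_2)\le\tau(n)$ forces $n/\gcd(n,\sigma(n))\le\tau(n)$, a divisibility constraint of the same flavour as Ore's harmonic condition, for which no unconditional infinite family is known. For the strong count you offer no proof at all, only a (correct) diagnosis: the strong even alpha numbers of order $(1,1)$ below $10^5$ tabulated in Section 4 are all multiply perfect, and producing infinitely many even $n$ with $\sigma(n)/n=\alpha_1/\alpha_2$ in lowest terms and $\max(\alpha_1,\alpha_2)\le\omega(n)$ subsumes the infinitude of even perfect or multiperfect numbers, which is wide open. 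So the proposal establishes one of the three limits, establishes a second only under a degenerate reinterpretation of the definition, and leaves the third untouched; it is not a proof of the conjecture, which remains open exactly where the paper leaves it.
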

\begin{con}
Let $\bar{N}_o(x,n)$ and $\bar{\bar{N}}_o(x,n)$ respectively count the number of weak and very weak odd alpha numbers $n$ that do not exceed real $x$. Then $\bar{N}_o(x,n)\rightarrow\infty$ and $\bar{\bar{N}}_o(x,n)\rightarrow\infty$ as $x\rightarrow\infty$.
\end{con}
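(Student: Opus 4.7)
The statement as written does not specify the order $(\underline{\alpha},\bar{\alpha})$ for which $n$ must qualify as a weak (resp.\ very weak) alpha number, so I read the conjecture as: for some choice of order, possibly depending on $n$, infinitely many odd $n\le x$ satisfy the relevant inequality. Under this reading my plan is to exhibit, for each of the two counting functions separately, an explicit infinite family of odd integers together with a convenient order that witnesses the inequality.

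\textbf{Weak case.} I would work at the order $(\underline{\alpha},\bar{\alpha})=(0,0)$. Then $\sigma_{0}(n)=\tau(n)\in\mathbb{Z}^{+}$ and $n^{0}=1$, so equation (1) forces the coprime representation $\alpha_{1}=\tau(n)$, $\alpha_{2}=1$, and hence $\max(\alpha_{1},\alpha_{2})=\tau(n)$. The weak condition $2\le\omega(n)<\max(\alpha_{1},\alpha_{2})\le\tau(n)$ collapses to $\omega(n)\ge 2$ together with the strict inequality $\omega(n)<\tau(n)$, which is automatic from $\tau(n)\ge 2^{\omega(n)}>\omega(n)$. Thus every odd $n$ with $\omega(n)\ge 2$ qualifies, and already the family $n=3q$ with $q$ an odd prime different from $3$ provides infinitely many such $n$, yielding $\bar{N}_{o}(x,n)\to\infty$.

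\textbf{Very weak case.} I would switch to the order $(\underline{\alpha},\bar{\alpha})=(1,1)$ and consider $n=3p$ with $p$ an odd prime $\equiv 2\pmod{3}$ (so in particular $p\ge 5$). Then $\sigma(n)=4(p+1)$, and since $3\mid p+1$ while $p$ is coprime to both $3$ and $4$, the reduced form of $\sigma(n)/n$ has numerator $\alpha_{1}=4(p+1)/3$ and denominator $\alpha_{2}=p$. Hence $\max(\alpha_{1},\alpha_{2})=(4p+4)/3$, while $\tau(n)=4$ and $n=3p$, so the very weak inequality $\tau(n)<\max(\alpha_{1},\alpha_{2})<n$ reduces to $4<(4p+4)/3<3p$, i.e.\ $p>2$ and $5p>4$, both automatic. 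Dirichlet's theorem on primes in the residue class $2\pmod{3}$ then produces infinitely many admissible $p$, and therefore $\bar{\bar{N}}_{o}(x,n)\to\infty$.

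\textbf{Main obstacle.} The real subtlety is the interpretation. If the conjecture is implicitly restricted to the single order $(1,1)$ (the order that drives Conjecture 1), then the very weak half still goes through via the $n=3p$ construction above, but the weak half becomes considerably harder: it would demand $\sigma(n)/\gcd(\sigma(n),n)\le\tau(n)$ for infinitely many odd $n$, a relation which forces $\gcd(\sigma(n),n)$ to dominate the average divisor $\sigma(n)/\tau(n)$ of $n$ and thereby ties the problem directly to the existence of (currently unknown) odd multi-perfect numbers. In that restrictive reading the weak part of the plan collapses, and to my knowledge no existing technique would rescue it.
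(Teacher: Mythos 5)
First, a point of order: the statement you were asked to prove is Conjecture 3 of the paper, and the paper offers no proof of it --- it is explicitly left open. So there is no argument of the author's to compare yours against; what you have written is an attempt to settle an open conjecture, and it has to be judged on that basis.

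On the merits, your treatment of the very weak half at order $(1,1)$ is correct and, as far as I can see, genuinely establishes $\bar{\bar{N}}_o(x,n)\to\infty$ under the natural reading of the conjecture: for $n=3p$ with $p\ge 5$ prime and $p\equiv 2\pmod 3$ one has $\sigma(n)=4(p+1)$ with $3\mid p+1$ and $p\nmid 4(p+1)$, so the reduced fraction is $\alpha_1/\alpha_2=\frac{4(p+1)/3}{p}$, whence $\max(\alpha_1,\alpha_2)=(4p+4)/3$ and $\tau(n)=4<(4p+4)/3<3p=n$; Dirichlet's theorem finishes it. (The congruence condition is essential: for $p\equiv 1\pmod 3$ the fraction does not reduce and $\max(\alpha_1,\alpha_2)=4p+4>n$, so those $n$ fail the very weak test --- you were right to impose it.) The gap is in the weak half. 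Your order-$(0,0)$ device does not repair it: at $(\underline{\alpha},\bar{\alpha})=(0,0)$ equation (1) reads $\tau(n)=\alpha_1/\alpha_2$, which every integer satisfies with $\alpha_1=\tau(n)$, $\alpha_2=1$, so ``weak alpha number of order $(0,0)$'' degenerates to the condition $\omega(n)\ge 2$ and the conjecture is trivialized rather than proved; this is strong evidence that the intended reading fixes a nontrivial order, presumably $(1,1)$ as in Conjecture 1 and Tables 2--3. Under that reading you correctly identify the weak half as asking for infinitely many odd $n$ with $\omega(n)<\sigma(n)/\gcd(\sigma(n),n)\le\tau(n)$, i.e.\ with $\gcd(\sigma(n),n)$ at least the average divisor $\sigma(n)/\tau(n)$; the paper exhibits three such $n$ in Table 3, but their infinitude is open and your proposal offers no route to it. In short: very weak half, essentially settled by your construction; weak half, still a conjecture, as you yourself concede.
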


 In what follows, we begin with some examples of @-numbers.
\section{ Examples of @-numbers}
Here, we extract some @-numbers from the table of the sum of positive divisors function given below:
\\
\begin{tabular}{|c|c|c|c|c|c|c|c|}
\hline 
$n$ & $\sigma_0(n)=\tau(n)$ & $\sigma_1(n)$ & $\sigma_2(n)$ & $\sigma_{0.5}(n)$ & $\sigma_{\sqrt{-1}}(n)$ & $\lfloor |\sigma_{0.5}(n) |\rfloor$ & $\lfloor |\sigma_{\sqrt{-1}}(n) |\rfloor$ \\ 
\hline 
1 & 1 & 1 & 1 & 1 & 1 & 1 & 1 \\ 
\hline 
2 & 2 & 3 & 5 & 2.4142 & 1.7692+ 0.6390i & 2 & 1 \\    
\hline 
3 & 2 & 4 & 10 & 2.7321 &  1.4548+ 0.8906i & 2 & 1 \\  
\hline 
4 & 3 & 7 & 21 & 4.4142 &  1.9527+ 1.6220i & 4 & 2 \\   
\hline 
5 & 2 & 6 & 26 &  3.2361 &  0.9614+ 0.9993i & 3 & 1 \\   
\hline 
6 & 4 & 12 & 50 & 6.5959 & 2.0049+ 2.5052i & 6 & 3 \\  
\hline 
7 & 2 & 8 & 50 & 3.6458 & 0.6336+ 0.9305i & 3 & 1 \\ 
\hline 
8 & 4 & 15 & 85 & 7.2426 & 1.466+ 2.4954i & 7 & 2 \\ 
\hline 
9 & 3 & 13 & 91 & 5.7321 & 0.8686+ 1.7007i & 5 & 1 \\ 
\hline 
10 & 4 & 18 & 130 & 7.8126 & 1.0624+ 2.3822i & 7 & 2 \\ 
\hline 
$\vdots$ & $\vdots$ & $\vdots$ & $\vdots$ & $\vdots$ & $\vdots$ & $\vdots$ & $\vdots$ \\ 
\hline
24 & 8& 60 & 850 & 19.787 & -0.0899+ 4.936i & 19 & 4 \\ 
\hline 
25 & 3 & 31 & 651 & 8.236  & -0.0356+ 0.922i & 8 & 0 \\ 
\hline 
26 & 4 & 42 & 850 & 11.118 & -0.0623+1.068i & 11 & 1 \\ 
\hline 
27 & 4 & 40 & 820 & 10.928 & -0.1200+ 1.547i & 10 & 1 \\ 
\hline 
28 & 6 & 56 & 1050 & 16.03 & -0.2719+ 2.845i & 16 & 2 \\ 
\hline 
29 & 2 & 30 & 842 & 6.385 & 0.0025-0.224i & 6 & 0 \\ 
\hline 
30 & 8 & 72 & 1300 & 21.344  & -0.5759+ 4.412i & 21 & 4 \\ 
\hline 
$\vdots$ & $\vdots$ & $\vdots$ & $\vdots$ & $\vdots$ & $\vdots$ & $\vdots$ & $\vdots$ \\ 
\hline
\end{tabular} 

\begin{center}
Table 1.
\end{center}

\textbf{Examples of strong alpha numbers of order }$(1,1)$:\\
\\
\begin{tabular}{|c|c|c|c|c|}
\hline 
$@_{(1,1)}$ & $\sigma_1(n)$ & $\alpha_1$ & $\alpha_2$ & $\omega(n)$ \\  
\hline 
$n = 6=2\cdot 3$& 12 & 2 & 1 & 2 \\ 
\hline 
$n = 28=2^2\cdot 7$& 56 & 2 & 1 & 2 \\ 
\hline 
$n = 523776 =2^9\cdot 3\cdot 11\cdot 31$ & 1571328 & 3 & 1 & 4 \\ 
\hline
$n = 707840=2^8\cdot 5\cdot 7\cdot 79$ & 1962240 & 3 & 1 & 4 \\ 
\hline
\end{tabular} 
\begin{flushleft}
Table 2.
\end{flushleft}
\textbf{Examples of weak alpha numbers of order }$(1,1)$:\\
\\
\begin{tabular}{|c|c|c|c|c|c|}
\hline 
$\bar{@}_{(1,1)}$ & $\sigma_1(n)$  & $\alpha_1$ & $\alpha_2$ & $\omega(n)$ & $\tau(n)$ \\ 
\hline 
$n = 24=2^3\cdot 3$ & 60  & 5 & 2 & 2 & 8 \\ 
\hline
$n = 11172 = 2^2\cdot 3\cdot 7^2\cdot 19$ & 31920  & 20 & 7 & 4 & 36 \\ 
\hline
$n = 544635^\ast = 3^2\cdot 5\cdot 7^2\cdot 13\cdot 19$ & 1244860  & 16 & 7 & 5 & 72 \\ 
\hline
$n = 931095^\ast = 3^4\cdot 5\cdot 11^2\cdot 19$ & 1931160  & 56 & 27 & 4 & 60 \\ 
\hline
$n = 6517665 = 3^4\cdot 5\cdot 7\cdot 11^2\cdot 19$ & 15449280  & 64 & 27 & 5 & 120 \\ 
\hline
\end{tabular} 
\begin{flushleft}
Table 3.
\end{flushleft}
\begin{tiny}
Thanks to Professor J. Shallit for pointing our attention (in a private communication) to the examples of odd alpha numbers in asterisk on Table 3 which enabled us to refine the conjecture of this paper concerning the existence of odd alpha numbers.
\end{tiny}

\section{Results on @-numbers:}
Then, we proceed to offer the following results which were inspired by the work of  Garcia \cite{Ga} and Ore \cite{ore2}, and which shall find application later in searching for alpha numbers.
\begin{theorem}\label{t1} All perfect, multiply perfect and Ore's harmonic number are alpha numbers of order $(1,1)$.
 \end{theorem}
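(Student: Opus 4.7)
The plan is to verify directly, case by case, that each of the three types of numbers satisfies the defining equation (1) with $(\underline{\alpha},\bar{\alpha}) = (1,1)$, i.e. $\sigma(n) = (\alpha_1/\alpha_2)\,n$ for some coprime positive integers $\alpha_1,\alpha_2$. Since being an \emph{alpha number} of order $(1,1)$ (without the strong/weak/very weak qualifier) requires only this identity, the theorem reduces to three short verifications obtained by reading off $\alpha_1/\alpha_2$ from the definition of the class.

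First I would dispatch the perfect numbers: if $n$ is perfect then $\sigma(n)=2n$, so take $(\alpha_1,\alpha_2)=(2,1)$, which is automatically in lowest terms and satisfies~(1). Next, for a multiply perfect $n$ with $\sigma(n)=kn$, $k\in\mathbb{Z}$, $k\ge 2$, take $(\alpha_1,\alpha_2)=(k,1)$; again $\gcd(\alpha_1,\alpha_2)=1$ and (1) holds with $\bar{\alpha}=1$. So the perfect and multiply perfect cases are little more than a rewriting of their defining equations.

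For Ore's harmonic numbers the argument is the one case requiring a touch more care. By Ore's defining property, the harmonic mean $H(n) = n\tau(n)/\sigma(n)$ of the divisors of $n$ is a positive integer. Rearranging yields $\sigma(n) = n\tau(n)/H(n)$, and after reducing $\tau(n)/H(n)$ to lowest terms as $\alpha_1/\alpha_2$ with $\gcd(\alpha_1,\alpha_2)=1$, one obtains exactly equation~(1) with $\bar{\alpha}=1$. Hence every Ore harmonic number is an alpha number of order $(1,1)$.

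The only step where any caution is required is the harmonic case, since one must invoke the integrality of $H(n)$ (which is Ore's \emph{definition}, not an extra hypothesis) to guarantee that $\sigma(n)/n$ is expressible as an explicit ratio of positive integers before reduction. Beyond this observation the whole argument is pure unpacking of definitions, so I do not anticipate any genuine obstacle; the theorem is essentially a remark that equation~(1) with $(\underline{\alpha},\bar{\alpha})=(1,1)$ is strictly more general than each of the three classical notions.
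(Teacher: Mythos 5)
Your proposal is correct and follows essentially the same route as the paper: both proofs simply unpack the defining equations $\sigma(n)=2n$, $\sigma(n)=kn$, and $\sigma(n)=n\tau(n)/H(n)$ to exhibit the ratio $\alpha_1/\alpha_2$ required by equation~(1) with order $(1,1)$. The paper's only additional remark is that $\sigma(n)<n\cdot n$ for $n>1$, so the resulting coefficient is always less than $n$; your treatment of the harmonic case (reducing $\tau(n)/H(n)$ to lowest terms using the integrality of $H(n)$) is, if anything, slightly more careful than the paper's.
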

  \begin{proof} Let a positive integer $n$ be $k$-fold multi perfect if $\sigma(n)=kn$. Also observe that $\sigma(n)< nn$ for every integer $n>1$, since $\tau(n)<n$ and each $d_i\mid n$ is such that $1\le d_i \le n$ and $n>\min (d_1, ... d_z)$. Thus there is no $n$-fold multi perfect number. So, 
 Theorem 3.1 is an implication of the definitions of multiply perfect numbers, Ore's harmonic numbers and alpha numbers since every multiply perfect number and Ore's harmonic number satisfy (1).
 \end{proof}
 \begin{theorem}\label{t2} A power of prime can neither be strong, weak, nor very weak alpha number of order $(1,1)$.
 \end{theorem}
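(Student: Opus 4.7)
The plan is to dispose of the three cases separately, exploiting the fact that for $n=p^a$ the invariants $\omega(n)$, $\tau(n)$, $\sigma(n)$ have very rigid forms. First I would compute $\omega(p^a)=1$ and $\tau(p^a)=a+1$, and recall that $\sigma(p^a)=1+p+\cdots+p^a=(p^{a+1}-1)/(p-1)$.

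For the \emph{strong} case, the defining inequality $2\le\max(\alpha_1,\alpha_2)\le\omega(n)$ forces $\omega(n)\ge2$, but $\omega(p^a)=1$, so the chain collapses immediately. The \emph{weak} case fails for exactly the same reason: its inequality $2\le\omega(n)<\max(\alpha_1,\alpha_2)\le\tau(n)$ again begins with $\omega(n)\ge2$, which is violated.

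The real content is the \emph{very weak} case, where I need $2\le\tau(n)<\max(\alpha_1,\alpha_2)<n$. I would write the alpha equation as $\alpha_2\sigma(p^a)=\alpha_1 p^a$ with $\gcd(\alpha_1,\alpha_2)=1$, and then observe that $\sigma(p^a)\equiv1\pmod p$, so $\gcd(p^a,\sigma(p^a))=1$. Consequently the fraction $\sigma(p^a)/p^a$ is already in lowest terms, forcing $\alpha_1=\sigma(p^a)$ and $\alpha_2=p^a$. Since $\sigma(p^a)>p^a$, we get $\max(\alpha_1,\alpha_2)=\sigma(p^a)>p^a=n$, directly contradicting the upper bound $\max(\alpha_1,\alpha_2)<n$ of the very weak definition.

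The main (only) obstacle is making sure the coprimality argument $\gcd(p^a,\sigma(p^a))=1$ is stated cleanly, because it is what pins down $\alpha_1$ and $\alpha_2$ uniquely; once this is in hand, the very weak case is a one-line size comparison. Everything else is bookkeeping with $\omega$ and $\tau$.
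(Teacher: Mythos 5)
Your proposal is correct and takes essentially the same route as the paper: the heart of both arguments is that $\gcd\bigl(\sigma(p^a),p^a\bigr)=1$ forces the reduced fraction $\sigma(p^a)/p^a$ to coincide with $\alpha_1/\alpha_2$, whence $\alpha_1=\sigma(p^a)>p^a=n$, contradicting the bound $\max(\alpha_1,\alpha_2)<n$. Your explicit dismissal of the strong and weak cases via $\omega(p^a)=1<2$ is only a minor reorganization of what the paper folds into the single weakest hypothesis $\alpha_1<n$.
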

 \begin{proof}It suffices to show that there is no $n=p^\alpha$ satisfying $\dfrac{\sigma(n)}{n}=\dfrac{\alpha_1}{\alpha_2}$ where arbitrary integers $\alpha_1$ and $\alpha_2$ are co-prime, and $\alpha_1< n$. Note that, $\sigma(p^\alpha)=1+p+...+p^\alpha$ is co-prime to $p$, so to $p^\alpha$, therefore the fraction $\sigma(n)/n$ is reduced. Should this equal $\alpha_1/\alpha_2$, then $\sigma(n)$ divides $\alpha_1$. In turn, this leads to $\sigma(p^\alpha)=1+p+...+p^\alpha\le p^\alpha$, a contradiction.
\end{proof}
 \begin{theorem}\label{t3} No square-free odd integer is a strong or weak alpha number of order $(1,1)$.
 \end{theorem}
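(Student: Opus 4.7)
The plan is to write $n = p_1 \cdots p_r$ with $p_1 < \cdots < p_r$ distinct odd primes (in both the strong and weak cases the definitions force $\omega(n) \ge 2$, so $r \ge 2$), put $\sigma(n)/n = \alpha_1/\alpha_2$ in lowest terms, and, since $\sigma(n) > n$, read both hypotheses as upper bounds on $\alpha_1 = \max(\alpha_1, \alpha_2)$. The whole argument hinges on a single $2$-adic observation: because every $1+p_i$ is even, $2^r \mid \sigma(n) = \prod_i(1+p_i)$, whereas $n$ is odd; therefore $\gcd(\sigma(n),n)$ is odd, and consequently $2^r \mid \alpha_1$.

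The strong case is then immediate: $\alpha_1 \le \omega(n) = r$ is incompatible with $\alpha_1 \ge 2^r$ for any $r \ge 1$. For the weak case we have $\alpha_1 \le \tau(n) = 2^r$, which together with $2^r \mid \alpha_1$ pins down $\alpha_1 = 2^r$ exactly. Equivalently $v_2(\sigma(n)) = r$, so each $1+p_i$ carries exactly one factor of $2$; this forces $p_i \equiv 1 \pmod 4$ for all $i$, and in particular $p_1 \ge 5$. Writing $m_i = (p_i+1)/2$ (each odd), we have $\sigma(n) = 2^r \prod_i m_i$, and from $\gcd(\sigma(n),n) = \sigma(n)/\alpha_1 = \prod_i m_i$ we conclude $\prod_i m_i \mid n$.

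The final step is a size-versus-divisibility contradiction. Because $n = p_1 \cdots p_r$ is square-free, every prime dividing $\prod_i m_i$ must equal some $p_j \ge p_1$; but $m_1 = (p_1+1)/2 < p_1$ contains no such prime, forcing $m_1 = 1$ and contradicting $m_1 \ge 3$. The main technical hurdle is the weak case's extraction of $\prod_i m_i \mid n$ from the lowest-terms equation; the strong case falls out of the $2$-adic bound alone, and once the divisibility $\prod_i m_i \mid n$ is in hand, the smallest-prime comparison closes the argument immediately.
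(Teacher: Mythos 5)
Your proof is correct and rests on the same two ingredients as the paper's: the $2$-adic observation that $2^{\omega(n)}\mid\alpha_1$ because each $\sigma(p_i)=p_i+1$ is even while $n$ is odd (which already kills the strong case), and the fact that the odd cofactor of the smallest prime, $(p_1+1)/2$, has all its prime factors below $p_1$ and hence coprime to $n$. Your weak-case bookkeeping --- pinning down $\alpha_1=2^r$, deducing $p_i\equiv 1\pmod 4$, and extracting $\prod_i m_i\mid n$ before comparing with the smallest prime --- is if anything more careful than the paper's direct assertion that $\min_i q_i\mid\alpha_1$ forces $\alpha_1>\tau(n)$, but it is the same argument in substance.
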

   \begin{proof} We show that $\max(\alpha_1, \alpha_2) \not<\tau(n)$ for any square-free odd integer $n$ satisfying (1).
On the contrary, suppose a positive odd integer $n$ of the canonical prime factorisation form $n=\prod_{i=1}^{\omega(n)}  p_i$ satisfies (1), then  $$\alpha_2=\frac{\alpha_1\prod_{i=1}^{\omega(n)}  p_i}{\sigma(\prod_{i=1}^{\omega(n)}  p_i)}=\frac{\alpha_1\prod_{i=1}^{\omega(n)}  p_i}{\prod_{i=1}^{\omega(n)}  \sigma(p_i)}=\frac{\alpha_1\prod_{i=1}^{\omega(n)}  p_i}{2^{\omega(n)}\prod_{i=1}^{\omega(n)} q_i}$$
where each $q_i$ is an integer greater than $1$, since $\sigma$ is multiplicative. Now that $\alpha_1$ and $\alpha_2$ are integers and each $p_i$ is an odd prime, it is necessary that $2^{\omega(n)} \mid \alpha_1$. Moreover, the $\min\lbrace q_1, q_2,...q_{\omega(n)}\rbrace$ must divide $\alpha_1$ since $1<\min\lbrace q_1, q_2,...q_{\omega(n)}\rbrace<\min\lbrace p_1, p_2,...p_{\omega(n)}\rbrace$. Thus, $\alpha_1> \tau(n)=2^{\omega(n)}$, a contradiction. Hence, Theorem 3.3.
\\
 \end{proof}
 An alternative proof of Theorem 3.3 can again proceed using the multiplicative property of sigma-function from $$\alpha_1=\frac{\alpha_2\sigma(\prod_{i=1}^{\omega(n)}  p_i) }{\prod_{i=1}^{\omega(n)}  p_i}=\alpha_2\cdot 2^{\omega(n)}\frac{\prod_{i=1}^{\omega(n)}\frac{\sigma(p_i)}{2}}{\prod_{i=1}^{\omega(n)}  p_i}$$ where one could infer that each $p_i\mid \alpha_2$ or $p_i\mid {\prod_{i=1}^{\omega(n)}{\sigma(p_i)}/{2}}\in\mathbb{N}$, by Euclid's lemma, and specifically $$\max \lbrace p_1, p_2,...p_{\omega(n)}\rbrace\nmid \prod_{i=1}^{\omega(n)}{\sigma(p_i)}/{2}\in\mathbb{N}.$$ Hence $\max \lbrace p_1, p_2,...p_{\omega(n)}\rbrace$ must divide $\alpha_2$ since $\alpha_1$ is an integer. Thus implies that $\alpha_1 > 2^{\omega(n)}=\tau(n)$, also a contradiction.
 \begin{theorem}\label{t4} There is no strong odd alpha number $n$ of order $(1,1)$ in the canonical form $\prod_{i=1}^{\omega(n)}p_i^{n_i}$, with each $n_i\ge 2$ and each $\sigma(p_i^{n_i})$ prime.
 \end{theorem}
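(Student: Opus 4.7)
The plan is a proof by contradiction that compares $p$-adic valuations in the identity $\alpha_2 \sigma(n) = \alpha_1 n$ and then collides a large-prime constraint with a trivial bound on $\pi(\omega(n))$.

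First I would fix notation. Suppose for contradiction that $n = \prod_{i=1}^{k} p_i^{n_i}$ with $k := \omega(n)$ is a strong odd alpha number of order $(1,1)$ satisfying the stated hypotheses; Theorem \ref{t2} lets us assume $k \ge 2$. Write $\sigma(n)/n = \alpha_1/\alpha_2$ with $\gcd(\alpha_1,\alpha_2) = 1$ and $\max(\alpha_1,\alpha_2) \le k$, and set $q_i := \sigma(p_i^{n_i})$, so that $\sigma(n) = \prod_i q_i$ is a product of $k$ primes. The elementary identity $q_i = 1 + p_i + \cdots + p_i^{n_i} > p_i$ gives $q_i \ne p_i$ for every $i$.

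Next, for each $j$ I would set $e_j := |\{i : q_i = p_j\}|$, so that equating $p_j$-adic valuations in $\alpha_2 \sigma(n) = \alpha_1 n$ yields
$$v_{p_j}(\alpha_2) + e_j \;=\; v_{p_j}(\alpha_1) + n_j.$$
The pivotal step is the observation that if $p_j > k$, then $\max(\alpha_1,\alpha_2) \le k < p_j$ forces $v_{p_j}(\alpha_1) = v_{p_j}(\alpha_2) = 0$, so $e_j = n_j \ge 2$. Letting $J := \{j : p_j > k\}$, each $j \in J$ thus supplies at least $n_j$ indices $i$ with $q_i = p_j$; these indices all lie in $\{1,\ldots,k\} \setminus \{j\}$ (because $q_j \ne p_j$), and for different $j \in J$ the sets are disjoint (distinct primes $p_j$). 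Therefore $2|J| \le \sum_{j \in J} n_j \le k$, i.e.\ $|J| \le k/2$.

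Finally, since every $p_j$ is odd, the complementary set $J' := \{j : p_j \le k\}$ has at most $\pi(k) - 1$ elements (odd primes $\le k$). Combining,
$$k \;=\; |J| + |J'| \;\le\; \tfrac{k}{2} + \pi(k) - 1, \qquad \text{hence} \qquad \pi(k) \ge \tfrac{k}{2} + 1.$$
This contradicts the trivial bound $\pi(k) \le (k+1)/2 < k/2 + 1$, coming from the fact that $2$ is the only even prime. The main obstacle I expect is isolating the large-prime step that yields $e_j = n_j$ when $p_j > k$, and, in particular, the bookkeeping point that $i = j$ is excluded from $\{i : q_i = p_j\}$ (which is what keeps the disjointness count tight); once this is in hand, the counting argument against $\pi(k)$ is immediate.
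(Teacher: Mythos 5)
Your proof is correct, but it follows a genuinely different route from the paper's. The paper argues globally: since each $\sigma(p_i^{n_i})$ is prime, $\sigma(n)$ has only $\omega(n)$ prime factors counted with multiplicity, so when $\sigma(n)/n$ is put in lowest terms the denominator $\alpha_2$ retains at least $\Omega(n)-\omega(n)$ prime factors of the odd number $n$, giving $\alpha_2\ge 3^{\Omega(n)-\omega(n)}\ge 3^{\omega(n)}>\omega(n)$ and contradicting $\max(\alpha_1,\alpha_2)\le\omega(n)$. You instead work locally, comparing $p_j$-adic valuations in $\alpha_2\sigma(n)=\alpha_1 n$: for each prime $p_j>k=\omega(n)$ the bound $\max(\alpha_1,\alpha_2)\le k$ kills the valuations of $\alpha_1,\alpha_2$, forcing $e_j=n_j\ge 2$ of the $k$ primes $q_i$ to equal $p_j$, whence $|J|\le k/2$; the remaining $p_j$ are distinct odd primes $\le k$, of which there are at most $\pi(k)-1\le (k-1)/2$, and the count fails. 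Each approach buys something: the paper's lower bound on $\alpha_2$ is exponential in $\omega(n)$ and (as its own alternative derivation notes) also exceeds $\tau(n)=\prod(n_i+1)$, so the same computation excludes weak alpha numbers of this shape; your argument is tighter at exactly the step the paper leaves implicit (why the reduced denominator inherits $\Omega(n)-\omega(n)$ prime factors), but because it relies on $p_j>k\ge\max(\alpha_1,\alpha_2)$ to annihilate $v_{p_j}(\alpha_1)$ and $v_{p_j}(\alpha_2)$, it is specific to the strong case and would not extend to $\max(\alpha_1,\alpha_2)\le\tau(n)$. One small remark: the exclusion of $i=j$ from $\{i:q_i=p_j\}$, which you flag as the delicate bookkeeping point, is not actually needed, since disjointness of the sets over $j\in J$ inside $\{1,\dots,k\}$ already gives $\sum_{j\in J}n_j\le k$; and the hypothesis that $n$ is odd enters only at the very end, through the count $\pi(k)-1$ of odd primes up to $k$.
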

 \begin{proof}
 It is sufficient to show that $\alpha_2 >\omega(n)$ for such $n$. Now, suppose contrary that such odd $n$ exists with its attached condition $n_i\ge 2$ and $\sigma(p_i^{n_i})$ is prime for each $i$. Thus
 \begin{equation}
 |\lbrace p_1, p_2, ..., p_{\omega(n)} \rbrace|= | \lbrace \sigma(p_1^{n_1}), \sigma(p_2^{n_2}), ..., \sigma(p_{\omega(n)}^{n_{\omega(n)}}) \rbrace|,
 \end{equation} where $|X|$ represent the cardinality of set $X$. Now by condition (4) and the fact that  $\omega(\sigma(p_i^{n_i}))=1$ implies $\omega(n)= \omega(\sigma(n))$, $n$ through (1), satisfies
 \begin{equation}
\alpha_2=\alpha_1\frac{\prod_{i=1}^{\omega(n)}p_i^{n_i}}{\prod_{i=1}^{\omega(n)}\sigma(p_i^{n_i})}\ge \min \lbrace p_1, ..., p_{\omega(n)}\rbrace^{\Omega(n) -\omega(\sigma(n))} \ge 3^{\Omega(n) -\omega(\sigma(n))}, 
\end{equation}
where $\Omega(n):=\sum_{ p \ prime, \\\ p^v\mid n}1$.\\
Thus (5) implies $$\alpha_2 \ge 3^{\Omega(n) -\omega(n)} \ge 3^{\omega(n)} >\omega(n),$$
since $n_i\ge 2 \ \forall \ i$. Thus yield the required contradiction.\\ 
 \end{proof}
 An alternative approach to establishing Theorem 3.4 is to consider $n_i\ge 2 \ \forall \ i$ which implies $\Omega(n)-\omega(n)\ge \frac{\Omega(n)}{2}$. This further implies $$\log_3\prod_{i=1}^{\omega(n)}3^{n_i -1}\ge \log_3\prod_{i=1}^{\omega(n)}3^{\frac{n_i}{2}}\ge \log_3\prod_{i=1}^{\omega(n)}n_i +1.$$
One can then conclude through 
$$\alpha_2=\alpha_1\frac{\prod_{i=1}^{\omega(n)}p_i^{n_i}}{\prod_{i=1}^{\omega(n)}\sigma(p_i^{n_i})}\ge 3^{\Omega(n) -\omega(n)} \ge \prod_{i=1}^{\omega(n)}3^{n_i -1}\ge\prod_{i=1}^{\omega(n)}3^{\frac{n_i}{2}}\ge \prod_{i=1}^{\omega(n)}n_i +1 =\tau(n)$$ which yields a contradiction, since necessarily $\max(\alpha_1, \alpha_2) < \omega(n)$.\\
\textbf{Remark}: An improvement of Theorem 3.4 is to exclude the condition $\sigma(p_i^{n_i})$ prime for every $i$ in the the statement of the theorem.
  \begin{theorem}\label{t5} Except for perfect numbers, no integer $n$ with prime factor decomposition $p_1^\alpha p_2$ is a strong alpha number of order $(1,1)$.
 \end{theorem}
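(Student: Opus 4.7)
The plan is to observe that the hypothesis $n=p_1^{\alpha}p_2$ pins down $\omega(n)=2$ exactly, and then to run through the (very short) list of pairs $(\alpha_1,\alpha_2)$ that the strong-alpha-number definition allows.

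First I would note that for any strong alpha number of order $(1,1)$, the definition requires $2\le\max(\alpha_1,\alpha_2)\le\omega(n)$. Since $n=p_1^{\alpha}p_2$ has $\omega(n)=2$, this forces $\max(\alpha_1,\alpha_2)=2$. Combined with the coprimality condition $\gcd(\alpha_1,\alpha_2)=1$ (which rules out $(2,2)$), there are only two possible ordered pairs: $(\alpha_1,\alpha_2)=(2,1)$ or $(\alpha_1,\alpha_2)=(1,2)$.

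Next I would dispose of the case $(\alpha_1,\alpha_2)=(1,2)$: equation (1) would read $\sigma(n)=n/2$, which is impossible because $\sigma(n)\ge n+1>n$ for every $n\ge 2$, so certainly for $n=p_1^{\alpha}p_2\ge 6$. This leaves only $(\alpha_1,\alpha_2)=(2,1)$, in which case (1) becomes $\sigma(n)=2n$; that is exactly the definition of a perfect number. Hence any strong alpha number of order $(1,1)$ of the form $p_1^{\alpha}p_2$ must be perfect, which is the content of Theorem 3.5.

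There is no real obstacle here: once $\omega(n)=2$ is read off from the prime factorisation, the coprimality constraint together with $\max(\alpha_1,\alpha_2)\le 2$ leaves only two candidate ratios, and the trivial inequality $\sigma(n)>n$ eliminates the ratio $1/2$. The only point worth stating carefully is that the bound $\max(\alpha_1,\alpha_2)\le\omega(n)$ is \emph{saturated} when $\omega(n)=2$, so no flexibility remains beyond the perfect-number ratio $2/1$.
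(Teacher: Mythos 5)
Your proof is correct and complete for the theorem as stated, but it is genuinely shorter than the paper's argument. You and the paper both begin the same way: $\omega(n)=2$ saturates the bound $2\le\max(\alpha_1,\alpha_2)\le\omega(n)$, coprimality kills $(2,2)$, and $\sigma(n)>n$ kills $(\alpha_1,\alpha_2)=(1,2)$, leaving only $\sigma(n)=2n$. At that point you stop, observing that $\sigma(n)=2n$ \emph{is} the definition of a perfect number, which is all the statement ``except for perfect numbers'' requires. The paper instead presses on with a case analysis on $p_2=2$ versus $p_2\neq 2$, using the coprimality of $p_1^\alpha$ with $\sigma(p_1^\alpha)$ to force $p_2=\sigma(p_1^\alpha)=p_1^\alpha+\dots+p_1+1$ and then $p_1=2$, so that $n=2^\alpha\sigma(2^\alpha)$ with $\sigma(2^\alpha)$ prime. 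What that extra work buys is a sharper structural conclusion: any strong alpha number of order $(1,1)$ of the shape $p_1^\alpha p_2$ is not merely perfect but an \emph{even} perfect number of the Euclid form (so, in particular, no odd perfect number can have only two distinct prime factors). Your version buys economy; if you want only the literal statement of Theorem 3.5, nothing further is needed, though you might add one sentence noting that the paper's longer route also identifies which perfect numbers actually occur.
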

 \begin{proof} Let $n=p_1^\alpha p_2$ with distinct primes $p_1$ and $p_2$ be a strong alpha number of order $(1,1)$. Then there exist $\alpha_1, \alpha_2\in\mathbb{Z}^+$ with  $(\alpha_1, \alpha_2)=1$ and $2\le \max(\alpha_1, \alpha_2)\le \omega(n)=2$ such that
 \begin{equation}
 \alpha_2=\frac{\alpha_1 \cdot p_1^\alpha\cdot p_2}{(p_1^\alpha +...+p_1 +1)(p_2+1)}. 
\end{equation}
Then we consider the possibilities $p_2=2$ and $p_2\not=2$:\\
For case $p_2=2$, it is necessary that $$\alpha_2=\frac{2\alpha_1\cdot p_1^\alpha}{3(p_1^\alpha +...+p_1 +1)}$$ and $p_1^\alpha +...+p_1 +1\mid 2\alpha_1$ since $(p_1^\alpha +...+p_1 +1, p_1^\alpha)=1.$ Thus follows that $p_1^\alpha +...+p_1 +1\le 2\alpha_1\le 4$ since $\alpha_1$ is $2$ at maximum. But $p_1^\alpha +...+p_1 +1\geq 3^\alpha +...+3 +1\not<4$. Thus necessarily $\alpha=1$, $p_1=3$, and consequently $n=6$, a perfect number.\\ 
For case $p_2\not=2$, from (6), we write  
\begin{equation}
 \alpha_2=\frac{p_1^\alpha\cdot p_2}{(p_1^\alpha +...+p_1 +1)((p_2+1)/\alpha_1)} 
\end{equation} and notice that the fact that $\sigma(p_1^\alpha p_2) > p_1^\alpha p_2$ implies, through (6), that $1\le \alpha_2< \alpha_1\le 2$, hence $\alpha_1=2$ and $\alpha_2=1$.\\
Equation (7) thus implies   
\begin{equation}
 p_2= p_1^\alpha +...+p_1 +1,
\end{equation}
since $p_2$ should necessarily factor $p_1^\alpha +...+p_1 +1$ while $p_1^\alpha +...+p_1 +1$ also divides $p_2$, a consequence of the fact that $(p_1^\alpha +...+p_1 +1, p_1^\alpha)=1$ and $(({p_2+1})/{2}, p_2)=1$.
This further implies 
$$ p_1^\alpha  =\frac{p_2+1}{2} =\frac{p_1^\alpha +...+p_1 +2}{2}\ge \frac{3^\alpha +...+ 3 +2}{2},$$ since $\alpha_2=1$ for (7). Thus $p_1\mid 2$, since necessarily $p_1\mid ({p_1^\alpha +...+p_1 +2})/{2}$. Thus implies $p_1=2$. Recalling Euclid's theorem which states that every number of the form $2^k\sigma(2^k)$ is perfect if $\sigma(2^k)$ is prime, it therefore follows that $n=p_1^\alpha p_2$ is a perfect number. Therefore, Theorem 3.4 holds.
 \\
 \end{proof}
  The next result under this section, although only provides a weak bound on the quotient $\alpha_1/\alpha_2$ of any alpha number $n$, is a direct consequence of the following important Lemmas connecting arithmetic functions such $\phi(n)$ and $\sigma(n)$:
\begin{lemma} \emph{(J.B. Rosser and L. Schenfield \cite{ros})}\\
If $n\geq 3$, then $\frac{n}{\phi(n)}<e^\gamma\log\log n +\frac{0.6483}{\log\log n}$ where $\gamma$ is the \emph{Euler constant}.
\end{lemma}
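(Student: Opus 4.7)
My plan is to prove the bound by reducing to the extremal case where $n$ is a primorial and then invoking an explicit form of Mertens' third theorem, essentially following the strategy of Rosser and Schoenfeld \cite{ros}. Writing $n=p_1^{a_1}\cdots p_k^{a_k}$ with $p_1<\cdots<p_k$, one has
\[
\frac{n}{\phi(n)}=\prod_{i=1}^{k}\frac{p_i}{p_i-1},
\]
so the ratio depends only on the set of prime divisors of $n$, not on the exponents. The map $n\mapsto n/\phi(n)$ is therefore maximised, among integers supported on at most $k$ primes, when those primes are the $k$ smallest ones; the worst case thus reduces to estimating $N_k/\phi(N_k)$ where $N_k=p_1p_2\cdots p_k$ is the $k$-th primorial.

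Next I would apply Mertens' third theorem in an explicit form: for an explicit constant $c>0$ and every $x\ge 2$,
\[
\prod_{p\le x}\left(1-\frac{1}{p}\right)^{-1}<e^{\gamma}\log x\cdot\left(1+\frac{c}{(\log x)^{2}}\right).
\]
Setting $x=p_k$ gives an upper bound for $N_k/\phi(N_k)$ in terms of $\log p_k$. To convert this into a bound involving $\log\log n$, I would invoke Chebyshev-type estimates that guarantee $\theta(p_k)=\log N_k\sim p_k$, so $\log p_k\sim\log\log N_k\le\log\log n$ once $n\ge N_k$. Careful bookkeeping of the error terms then converts the multiplicative correction $1+c/(\log x)^{2}$ into the additive correction $0.6483/\log\log n$ advertised in the lemma; the residual range of small $n$ is handled by direct numerical verification.

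The main obstacle, and the whole reason the constant $0.6483$ is non-trivial, lies in the second step: obtaining a fully explicit version of Mertens' estimate with a usable constant. This cannot be done by soft analytic arguments, since the implicit constants in Mertens' theorem sit inside an already sharp inequality. It requires the explicit bounds of Rosser and Schoenfeld on the Chebyshev functions $\theta(x)$ and $\psi(x)$, which in turn rest on a numerically verified zero-free region for the Riemann zeta function together with direct tabulation of $\theta$ on a long initial interval. With those tools in hand one obtains an explicit bound on $\sum_{p\le x}1/p-\log\log x-M$ (where $M$ is the Meissel--Mertens constant); exponentiating and controlling the tail series $\sum_{p}(\log(1-1/p)+1/p)$ then delivers the stated constant after a short computation.
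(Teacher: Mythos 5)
The paper offers no proof of this lemma at all: it is quoted as a known result and attributed to Rosser and Schoenfeld, so there is nothing internal to compare your argument against. Your outline is indeed the skeleton of the genuine Rosser--Schoenfeld argument (reduce to primorials via $n/\phi(n)=\prod_{p\mid n}p/(p-1)$, apply an explicit Mertens product bound at $x=p_k$, pass from $\log p_k$ to $\log\log n$ via explicit Chebyshev estimates, check small $n$ by hand), and the reduction step is sound once one notes that $t\mapsto e^{\gamma}t+c/t$ is increasing only for $t>\sqrt{c/e^{\gamma}}$, which your numerical check of small $n$ is meant to absorb.

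The genuine gap is in the step you wave through as ``careful bookkeeping of the error terms then converts the multiplicative correction into the additive correction $0.6483/\log\log n$'': no bookkeeping can deliver that constant, because the statement as printed is false. Already $n=30$ gives $n/\phi(n)=30/8=3.75$ while $e^{\gamma}\log\log 30+0.6483/\log\log 30\approx 2.18+0.53=2.71$; the primorials $210$, $2310$, $30030$, \dots all violate the claimed bound as well, so your own reduction to primorials would expose the failure rather than complete the proof. The constant $0.6483$ belongs to Robin's inequality for $\sigma(n)/n$ (namely $\sigma(n)/n\le e^{\gamma}\log\log n+0.6483/\log\log n$ for $n\ge 3$); the actual Rosser--Schoenfeld theorem for $n/\phi(n)$ has the constant $2.50637$ and even then admits the single exception $n=2\cdot3\cdot5\cdots23=223092870$. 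Your strategy would prove that corrected statement, but as written your final step asserts a numerical conclusion that cannot be reached, and a proof attempt ought to have detected this by testing the first few primorials before trusting the constant.
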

\begin{lemma} \emph{(J. s\'{a}ndor \cite{san})}\\
There is a constant $C>0$ such that $\frac{n}{\phi(n)}<C\cdot\log\log\phi(n) \ \forall \ n>3.$
\end{lemma}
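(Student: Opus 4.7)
The plan is to derive S\'andor's bound as a consequence of the Rosser--Schoenfeld bound (Lemma 3.7) by showing that, asymptotically, $\log\log\phi(n)$ and $\log\log n$ differ only by a $o(1)$ term. Once that is established, Rosser--Schoenfeld directly yields a constant $C$ of the shape roughly $2e^{\gamma}$ (plus a safety margin to absorb small $n$).

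First I would \emph{invert} Rosser--Schoenfeld to get a lower bound for $\phi(n)$. From
\[
\frac{n}{\phi(n)}<e^{\gamma}\log\log n+\frac{0.6483}{\log\log n}
\]
valid for $n\ge 3$, I would conclude that there is an absolute constant $K>0$ such that $\phi(n)\ge n/(K\log\log n)$ for every $n$ large enough. Taking logarithms twice,
\[
\log\log\phi(n)\ge \log\bigl(\log n-\log\log\log n-\log K\bigr),
\]
and factoring $\log n$ out of the argument gives $\log\log\phi(n)=\log\log n+\log(1-o(1))=\log\log n-o(1)$ as $n\to\infty$. In particular, the ratio $\log\log\phi(n)/\log\log n$ tends to $1$.

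Next I would fix a threshold $n_{0}$ large enough so that for every $n\ge n_{0}$ one has $\log\log n\le 2\log\log\phi(n)$ (any constant strictly greater than $1$ in place of $2$ also works, at the cost of a larger $C$). Plugging this into Rosser--Schoenfeld gives
\[
\frac{n}{\phi(n)}<2e^{\gamma}\log\log\phi(n)+\frac{0.6483}{\log\log\phi(n)},
\]
and since $\log\log\phi(n)$ is bounded below by a positive constant for $n\ge n_{0}$, the second summand can be absorbed into a slight enlargement of the leading coefficient, yielding an inequality of the shape $n/\phi(n)<C_{1}\log\log\phi(n)$ for all $n\ge n_{0}$ with an explicit $C_{1}$.

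Finally, the remaining range $3<n<n_{0}$ consists of finitely many integers. On this range $n/\phi(n)$ is obviously bounded, and $\log\log\phi(n)$ is bounded below by a positive constant (assuming, as the statement implicitly does, that one restricts to $n$ with $\phi(n)$ large enough for $\log\log\phi(n)$ to be positive, i.e.\ $\phi(n)\ge 3$). One then chooses $C=\max(C_{1},\,C_{2})$, where $C_{2}$ is large enough to dominate the finite list of ratios $\{n/(\phi(n)\log\log\phi(n))\}$, and the lemma follows.

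The main obstacle is not the asymptotic step -- that is essentially a one-line consequence of Rosser--Schoenfeld -- but rather the bookkeeping around very small $n$, where $\phi(n)$ may be so small that $\log\log\phi(n)$ becomes problematic (e.g.\ $n=4$ gives $\phi(n)=2$ and $\log\log\phi(n)<0$). One handles this by restricting the inequality to $n$ with $\phi(n)\ge 3$ and then picking $C$ large enough to swallow the remaining finite case list, which is exactly the approach that justifies the ``$\forall\, n>3$'' quantifier in the stated form.
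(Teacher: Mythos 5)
The paper does not prove this lemma at all: it is imported verbatim from S\'andor's paper \cite{san} and used as a black box, so there is no internal proof to compare yours against. Your blind derivation from the Rosser--Schoenfeld bound is a legitimate and essentially complete way to recover the statement: inverting Rosser--Schoenfeld to get $\phi(n)\ge n/(K\log\log n)$, deducing $\log\log\phi(n)=\log\log n-o(1)$, and then absorbing the lower-order term and the finitely many small $n$ into the constant is sound, and the constant you obtain (roughly $2e^{\gamma}$ plus slack) is of the right order. A cheaper route to the same middle step is the elementary bound $\phi(n)\ge\sqrt{n}$ for $n>6$, which gives $\log\log\phi(n)\ge\log\log n-\log 2$ directly without invoking Rosser--Schoenfeld twice; but your version works. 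One substantive point you raise deserves emphasis rather than being treated as bookkeeping: for $n=4$ and $n=6$ one has $\phi(n)=2$, so $\log\log\phi(n)<0$ and the inequality $n/\phi(n)<C\log\log\phi(n)$ is false for every $C>0$. Hence the quantifier ``$\forall\,n>3$'' in the lemma as transcribed in the paper cannot be literally correct; your restriction to $n$ with $\phi(n)\ge 3$ (equivalently $n\ge 7$) is not an optional convenience but a necessary repair, and it is worth flagging that the paper's statement should be read with that amendment (or checked against S\'andor's original formulation).
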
 
\begin{theorem}\label{t6} For any alpha number $n\ge 3$ of order $(1,1)$, $${\alpha_1}/{\alpha_2} < e^\gamma\log\log n +\frac{0.6483}{\log\log n},$$ 
where $\gamma$ is the \emph{Euler constant}, and moreover, there exists a constant $C>0$ such that $${\alpha_1}/{\alpha_2}< C\log\log \phi(n)$$.
 \end{theorem}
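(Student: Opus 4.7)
The plan is to express $\alpha_1/\alpha_2$ in a form where the two stated lemmas apply directly, and the only real piece of work is recording a classical inequality comparing $\sigma(n)/n$ with $n/\phi(n)$.

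First, since $n$ is an alpha number of order $(1,1)$, by definition it satisfies $\sigma(n) = (\alpha_1/\alpha_2)\,n$, so the quantity to be bounded is nothing but $\alpha_1/\alpha_2 = \sigma(n)/n$. Then, writing $n = \prod_{i=1}^{\omega(n)} p_i^{a_i}$, I would use the Euler-product identities
$$\frac{\sigma(n)}{n} \;=\; \prod_{i=1}^{\omega(n)} \frac{1 - p_i^{-a_i-1}}{1 - p_i^{-1}}, \qquad \frac{n}{\phi(n)} \;=\; \prod_{i=1}^{\omega(n)} \frac{1}{1 - p_i^{-1}},$$
to read off the factor-by-factor comparison $\sigma(n)/n < n/\phi(n)$ (each numerator $1 - p_i^{-a_i-1}$ is strictly less than $1$). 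This is the only nontrivial ingredient beyond the two lemmas, and it is standard.

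Next I would invoke Lemma 3.6 (Rosser--Schoenfield): since $n \geq 3$,
$$\frac{\alpha_1}{\alpha_2} \;=\; \frac{\sigma(n)}{n} \;<\; \frac{n}{\phi(n)} \;<\; e^{\gamma}\log\log n + \frac{0.6483}{\log\log n},$$
which is the first asserted bound. For the second, I would feed the same chain of inequalities into Lemma 3.7 (Sándor): there is a constant $C > 0$ such that for all $n > 3$,
$$\frac{\alpha_1}{\alpha_2} \;<\; \frac{n}{\phi(n)} \;<\; C\log\log \phi(n),$$
which is the second assertion. The edge case $n = 3$ for the second inequality is handled by adjusting $C$ if necessary, since $\phi(3) = 2$ makes $\log\log\phi(3)$ ill-defined; one can either absorb this into the constant or note that $n = 3$ is trivially not an alpha number (it would force $\alpha_1/\alpha_2 = 4/3$ with $\max(\alpha_1,\alpha_2) = 4 \not\leq \omega(3) = 1$).

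Essentially no step is an obstacle: the only point that needs care is the factor-wise argument for $\sigma(n)/n < n/\phi(n)$, and even that is one line once the Euler products are written out. The bound is intentionally weak, as the paper notes, because it discards all the arithmetic structure imposed by (1) and uses only the crude estimate $\sigma(n)/n < n/\phi(n)$; any sharper bound would require exploiting the rationality constraint $\sigma(n)/n = \alpha_1/\alpha_2$ with the size restrictions $\max(\alpha_1,\alpha_2) \leq \omega(n)$ (or $\leq \tau(n)$), which this theorem does not attempt.
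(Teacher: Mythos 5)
Your proposal is correct and follows essentially the same route as the paper: establish $\sigma(n)/n < n/\phi(n)$ via the Euler-product expressions and then apply the Rosser--Schoenfeld and S\'andor lemmas to $\alpha_1/\alpha_2 = \sigma(n)/n$. In fact you supply more detail than the paper's own proof sketch (the explicit factor-by-factor comparison and the $n=3$ edge case for the second bound), so nothing further is needed.
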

 \begin{proof}
It is sufficient to show that $\frac{\sigma(n)}{n}<\frac{n}{\phi(n)}$ using Euler product formula $\phi(n)=n\prod_{p\mid n}(1-\frac{1}{p})$ and $\sigma(n=\prod_{i=1}^{\omega(n)}p_i^{n_i})=\prod_{i=1}^{\omega(n)}\frac{p^{n_i+1}-1}{p-1}$, and then applying Lemma 3.6 and Lemma 3.7 into the definition of alpha numbers.
\\
 \end{proof}
 \par
\noindent
 \textbf{Remark :} We remark that the above bound is very weak for strong alpha number, since quotient ${\alpha_1}/{\alpha_2}\le \omega(n)$ necessarily, so it is presented for further study.
 \begin{theorem}\label{t7} Any strong alpha number $n$ of order $(\underline{\alpha}\in\mathbb{N},\bar{\alpha}\in\mathbb{N})$ with $\omega(n)\le 2$ cannot be a perfect square.
 \end{theorem}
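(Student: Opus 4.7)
The plan is to force a parity contradiction on the defining relation $\alpha_2\sigma_{\underline\alpha}(n)=\alpha_1 n^{\bar\alpha}$ by exploiting the fact that $\sigma_{\underline\alpha}$ evaluated at a perfect-square prime power is always odd.

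First I would observe that the strong-alpha hypothesis $2\le\max(\alpha_1,\alpha_2)\le\omega(n)$ together with the assumption $\omega(n)\le 2$ forces $\omega(n)=2$ and $\max(\alpha_1,\alpha_2)=2$; since $\gcd(\alpha_1,\alpha_2)=1$, the only possibilities are $(\alpha_1,\alpha_2)\in\{(1,2),(2,1)\}$. Suppose for contradiction that $n$ is a perfect square; then I may write $n=p^{2a}q^{2b}$ with $p\ne q$ prime and $a,b\ge 1$.

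The key intermediate claim I would prove is that $\sigma_{\underline\alpha}(r^{2c})=1+r^{\underline\alpha}+r^{2\underline\alpha}+\cdots+r^{2c\underline\alpha}$ is odd for every prime $r$ and every $\underline\alpha\in\mathbb{N}$: when $r$ is odd it is a sum of $2c+1$ odd terms, and when $r=2$ it equals $1$ plus a sum of even terms. Using the multiplicativity of $\sigma_{\underline\alpha}$, this makes $\sigma_{\underline\alpha}(n)=\sigma_{\underline\alpha}(p^{2a})\,\sigma_{\underline\alpha}(q^{2b})$ odd.

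Finally, I would substitute into $\alpha_2\sigma_{\underline\alpha}(n)=\alpha_1 n^{\bar\alpha}$ and split on the parity of $n$. If $n$ is odd, then $n^{\bar\alpha}$ is odd, and both $(\alpha_1,\alpha_2)=(1,2)$ and $(2,1)$ force one side of the equation to be odd and the other to be even, a contradiction. If $n$ is even, I take $p=2$; the case $(2,1)$ again gives odd $=$ even, while $(1,2)$ forces the $2$-adic valuation of $2\sigma_{\underline\alpha}(n)$, which equals $1$, to match that of $n^{\bar\alpha}$, which equals $2a\bar\alpha\ge 2$, again impossible. The only delicate point in the whole argument is the oddness of $\sigma_{\underline\alpha}(2^{2a})$ inside the intermediate claim; once that is pinned down, the concluding case split is a routine parity and $2$-adic valuation check.
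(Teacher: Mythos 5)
Your proof is correct, and it rests on exactly the same key fact as the paper's: $\sigma_{\underline\alpha}(p^{2c})$ is odd for every prime $p$ (a sum of an odd number of odd terms when $p$ is odd, and $1$ plus even terms when $p=2$), so $\sigma_{\underline\alpha}(n)$ is odd whenever $n$ is a perfect square. Where you diverge is in how that fact is deployed. The paper first fixes $(\alpha_1,\alpha_2)=(2,1)$, then uses the coprimality of $p_i^{2\bar\alpha x}$ with $\sigma_{\underline\alpha}(p_i^{2x})$ to match up factors and derive equations of the form $2p_1^{2\bar\alpha x}=\sigma_{\underline\alpha}(p_2^{2y})$, which are then impossible by parity. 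You skip the factor-matching entirely and read the parity (and, in the even case, the $2$-adic valuation) off the single equation $\alpha_2\sigma_{\underline\alpha}(n)=\alpha_1 n^{\bar\alpha}$, which is shorter and less delicate. More importantly, you treat both $(\alpha_1,\alpha_2)=(2,1)$ and $(1,2)$; the paper asserts $\alpha_1=2$, $\alpha_2=1$ without justification, and for general $\underline\alpha,\bar\alpha\in\mathbb{N}$ one can have $\sigma_{\underline\alpha}(n)<n^{\bar\alpha}$ (e.g.\ $\underline\alpha=1$, $\bar\alpha=2$), so the case $(1,2)$ genuinely needs the separate valuation argument you give. Your version is therefore not just an alternative but a completion of the paper's argument.
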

 \begin{proof}
Case $\omega(n)=1$ is a direct implication of Theorem 3.2, so we only need to establish that for $\omega(n)=2$, $n=p_1^{2x}p_2^{2y}$ with distinct primes $p_1$ and $p_2$ does not satisfy
\begin{equation}
\alpha_2=\frac{\alpha_1 \cdot p_1^{2\bar{\alpha}x}\cdot p_2^{2\bar{\alpha}y}}{\sigma_{\underline{\alpha}}(p_1^{2x})\sigma_{\underline{\alpha}}(p_2^{2y})}, 
\end{equation}
where $\alpha_1=2$ and $\alpha_2=1$ (that is, $n$ contradicts the definition of strong alpha numbers). If on the contrary, equation (9) holds, then $n$ must satisfy $2p_1^{2\bar{\alpha}x}=\sigma{\underline{\alpha}}(p_2^{2y})$ and $p_2^{2\bar{\alpha}y}=\sigma{\underline{\alpha}}(p_1^{2x})$, or $2p_2^{2\bar{\alpha}y}=\sigma{\underline{\alpha}}(p_1^{2 x})$ and $p_1^{2\bar{\alpha}x}=\sigma{\underline{\alpha}}(p_2^{2y})$, since $(p_1^{2\bar{\alpha}x}, \sigma{\underline{\alpha}}(p_1^{2x}))=1$ and $(p_2^{2\bar{\alpha}y}, \sigma{\underline{\alpha}}(p_2^{2y}))=1$. But this contradicts the fact that both $\sigma{\underline{\alpha}}(p_1^{2x})$ and $\sigma{\underline{\alpha}}(p_2^{2y})$ are odd. Hence Theorem 3.9.
\\
 \end{proof}
 \vspace{0.5cm}
   \begin{theorem}\label{t8} $@_{(\underline{\alpha}\in\mathbb{R}_{>0},\bar{\alpha}\in\mathbb{R}_{>\underline{\alpha}+2});\mathbb{N}}=\emptyset$.
 \end{theorem}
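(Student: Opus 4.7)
The plan is to show that the hypothesis $\bar{\alpha}>\underline{\alpha}+2$ forces the ratio $\sigma_{\underline{\alpha}}(n)/n^{\bar{\alpha}}$ to fall strictly below $1/\omega(n)$, which is the smallest value the admissible quotient $\alpha_1/\alpha_2$ can take for a strong alpha number. That immediately contradicts the defining equation $\sigma_{\underline{\alpha}}(n)=(\alpha_1/\alpha_2)\,n^{\bar{\alpha}}$.

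First I would unpack the definition. Any strong alpha number forces $2\le\max(\alpha_1,\alpha_2)\le\omega(n)$, so $\omega(n)\ge 2$ and hence $n\ge 6$. Since $\alpha_1,\alpha_2$ are positive integers both bounded above by $\omega(n)$, the rational $\alpha_1/\alpha_2$ lies in the interval $[1/\omega(n),\omega(n)]$; it will suffice to violate the lower endpoint.

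Next I would bound $\sigma_{\underline{\alpha}}(n)$ from above. Because $\underline{\alpha}>0$, every divisor $d$ of $n$ satisfies $d^{\underline{\alpha}}\le n^{\underline{\alpha}}$, so
\[\sigma_{\underline{\alpha}}(n)=\sum_{d\mid n}d^{\underline{\alpha}}\le\tau(n)\,n^{\underline{\alpha}}.\]
Dividing by $n^{\bar{\alpha}}$ and invoking $\bar{\alpha}-\underline{\alpha}>2$ together with $n\ge 6>1$,
\[\frac{\sigma_{\underline{\alpha}}(n)}{n^{\bar{\alpha}}}\le\frac{\tau(n)}{n^{\bar{\alpha}-\underline{\alpha}}}<\frac{\tau(n)}{n^{2}}.\]
The trivial estimates $\tau(n)\le n$ and $\omega(n)\le n$ then yield $\tau(n)\omega(n)\le n^{2}$, so $\tau(n)/n^{2}\le 1/\omega(n)$. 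Chaining gives
\[\frac{\sigma_{\underline{\alpha}}(n)}{n^{\bar{\alpha}}}<\frac{1}{\omega(n)}\le\frac{\alpha_1}{\alpha_2},\]
contradicting the defining identity.

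Everything in this argument is elementary; no appeal to the Rosser--Schenfeld or S\'andor lemmas of Section~3 is required. The only real subtlety is bookkeeping of strict versus non-strict inequalities: the strict hypothesis $\bar{\alpha}-\underline{\alpha}>2$ is precisely what promotes the non-strict bound $\tau(n)\omega(n)\le n^{2}$ into a strict final inequality. If one wished to drop the strictness and assume only $\bar{\alpha}\ge\underline{\alpha}+2$, one would need to work slightly harder, for instance replacing $\tau(n)\le n$ by the sharper $\tau(n)<n$ for $n\ge 3$ to still obtain a strict conclusion.
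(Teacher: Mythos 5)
Your argument is correct, and it shares its engine with the paper's proof --- both start from the bound $\sigma_{\underline{\alpha}}(n)\le \tau(n)\,n^{\underline{\alpha}}$ (the paper states the same inequality prime-power by prime-power, $\sigma_{\underline{\alpha}}(p_i^{n_i})<p_i^{n_i\underline{\alpha}}\tau(p_i^{n_i})$) and both cash in the exponent gap $\bar{\alpha}-\underline{\alpha}>2$ to force the ratio $\sigma_{\underline{\alpha}}(n)/n^{\bar{\alpha}}$ below what the constraint $\max(\alpha_1,\alpha_2)\le\omega(n)$ permits. Where you diverge is in how the surplus factor is harvested: the paper works locally, claiming $\tau(p_i^{n_i})<3^{n_i}\le p_i^{2n_i-1}$ so that $\sigma_{\underline{\alpha}}(p_i^{n_i})\le p_i^{n_i\bar{\alpha}-1}$ for each $i$, multiplying up to get $\alpha_2\ge\alpha_1\prod_i p_i>\omega(n)$; you instead stay global and use the crude but unimpeachable estimates $\tau(n)\le n$ and $\omega(n)\le n$ to get $\tau(n)/n^2\le 1/\omega(n)$, contradicting $\alpha_1/\alpha_2\ge 1/\omega(n)$. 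Your version buys robustness: the paper's intermediate inequality $3^{n_i}\le p_i^{2n_i-1}$ is actually false for $p_i=2$ with $n_i\in\{1,2\}$ (e.g.\ $3\not\le 2$), so its local chain needs patching for even $n$ even though its final conclusion survives, whereas your global route has no such gap. The paper's version, when repaired, is sharper --- it extracts a factor $\prod_i p_i$ rather than just $\omega(n)$ --- but that extra strength is not needed for the statement, and your bookkeeping of strict versus non-strict inequalities is handled correctly.
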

\begin{proof} On the contrary, let $n\in@_{(\underline{\alpha}\in\mathbb{R}_{>0},\bar{\alpha}\in\mathbb{R}_{>2 +\underline{\alpha}});\mathbb{N}}\not=\emptyset$ such that $n$ with unique prime decomposition $\prod_{i=1}^{\omega(n)}p_i^{n_i}$ satisfies (1) with $2\le\max (\alpha_1,\alpha_2)\le \omega(n)$.  Thus, in order to establish Theorem 3.10, it is sufficient to show that
\begin{equation}
\prod_{i=1}^{\omega(n)}\sigma_{\underline{\alpha}}(p_i^{n_i})\le\prod_{i=1}^{\omega(n)}p_i^{(n_i\bar{\alpha}-1)} \ \forall \ \ i,
\end{equation} 
if $\underline{\alpha}\in\mathbb{R}_{>0}$ and $\bar{\alpha}\in\mathbb{R}_{>\underline{\alpha}+2}$, since $\sigma_x(n)$ is a multiplicative function and $n$ necessarily satisfies $\prod_{i=1}^{\omega(n)}p_i >\omega(n)$ and 
\begin{equation}
 \alpha_2=\alpha_1\frac{\prod_{i=1}^{\omega(n)}p_i^{(n_i\bar{\alpha}-1)} \prod_{i=1}^{\omega(n)}p_i}{\sigma_{\underline{\alpha}}(\prod_{i=1}^{\omega(n)}p_i^{n_i})}.  
\end{equation}
Then, we have to verify the following assertion which implies (10):
\begin{equation}
\sigma_{\underline{\alpha}}(p_i^{n_i})\le p_i^{(n_i\bar{\alpha}-1)} \ \forall \ i 
\end{equation}
where $\underline{\alpha}\in\mathbb{R}_{>0}$ and $\bar{\alpha}\in\mathbb{R}_{>\underline{\alpha}+2}$.
Clearly, by the definition of $\tau(.)$ and $\sigma_x(.)$, $$\sigma_{\underline{\alpha}}(p_i^{n_i})<p_i^{n_i\underline{\alpha}}\cdot \tau(p_i^{n_i}),$$ and obviously, $$p_i^{n_i\underline{\alpha}}\cdot \tau(p_i^{n_i})<p_i^{n_i\underline{\alpha}}\cdot 3^{n_i}\le p_i^{n_i\underline{\alpha}}\cdot p_i^{2n_i-1}.$$ Thus, setting $\bar{\alpha}=\underline{\alpha}+ 2$ in (12) implies (10). Consequently $\alpha_2 > \omega(n)$, a contradiction. Therefore Theorem 3.10 follows.\\  
\end{proof}
In what follows, we formally include the following propositions without proof:\\
\begin{proposition}
Let $N(x)$ and $N^\star(x)$ respectively count the number of strong and partial alpha numbers that do not exceed real $x$. Then $N^\star(x) \gg N(x)$ for a sufficiently large $x$.

\end{proposition}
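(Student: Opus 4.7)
The plan is to compare the two counting functions directly and show that $N^\star(x)/N(x) \to \infty$, which is the natural reading of $N^\star(x) \gg N(x)$ for sufficiently large $x$.

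First I would establish the almost trivial lower bound $N^\star(x) \ge \lfloor x\rfloor$. For any fixed order $(\underline{\alpha},\bar{\alpha})$ and any $n\in\mathbb{N}$, the choice
\[
\alpha \;:=\; \sigma_{\underline{\alpha}}(n)/n^{\bar{\alpha}} \;\in\; \mathbb{Q} \;\subseteq\; \mathbb{H}
\]
satisfies the defining equation $\sigma_{\underline{\alpha}}(n) = \alpha n^{\bar{\alpha}}$ with no further size restriction imposed by the definition of partial alpha numbers. Hence every positive integer $n\le x$ is already a partial alpha number, so $N^\star(x) = \lfloor x\rfloor$ grows linearly in $x$.

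Second I would show $N(x) = o(x)$, so the ratio diverges. Specialising to the prototype order $(1,1)$, a strong alpha number $n$ must satisfy $\sigma(n)/n = \alpha_1/\alpha_2$ with coprime positive integers obeying $2\le \max(\alpha_1,\alpha_2)\le \omega(n)$. Since $\omega(n) = O(\log n/\log\log n)$, the family of admissible rationals has cardinality $O((\log n)^2)$, and by Theorem 3.8 each admissible ratio further lies in the short interval $[1,\,e^\gamma \log\log n + o(1)]$. Classical density results of Davenport--Erd\H{o}s--Wirsing type guarantee that for every fixed rational $r$, the set $\{n:\sigma(n)/n=r\}$ has natural density zero. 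A dyadic decomposition of $[1,x]$ together with a union bound over the slowly growing family of admissible rationals then yields $N(x) = o(x)$. Combining this with the first step gives $N^\star(x)/N(x)\to\infty$.

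The main obstacle will be making the union bound uniform, since the admissible family of rationals $\alpha_1/\alpha_2$ grows (albeit only polylogarithmically) with $n$ and so the density-zero statement cannot be invoked once and for all on a single set. I would handle this by splitting $[1,x]$ into dyadic blocks $(x/2^{k+1},x/2^k]$ on which $\omega(n)$ is uniformly bounded by $O(\log x/\log\log x)$, applying the density-zero result block by block to the corresponding finite set of admissible rationals, and summing; the resulting polylogarithmic inefficiency is harmless against the linear lower bound on $N^\star(x)$. The same argument extends to general integer orders $(\underline{\alpha},\bar{\alpha})$ by using the multiplicativity of $\sigma_{\underline{\alpha}}$ and the obvious analogue of Theorem 3.8.
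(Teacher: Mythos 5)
The paper states this proposition explicitly ``without proof,'' so there is no argument of the author's to compare yours against; I can only assess your attempt on its own terms. Your reading of $\gg$ as $N^\star(x)/N(x)\to\infty$ is the charitable one: under the literal Vinogradov meaning the claim is vacuous, since every strong alpha number is a fortiori a partial alpha number and hence $N^\star(x)\ge N(x)$ already. Your first step is correct and essentially forced by the paper's definitions: a partial alpha number only needs to satisfy $\sigma_{\underline{\alpha}}(n)=\alpha n^{\bar{\alpha}}$ for \emph{some} $\alpha\in\mathbb{H}$, which every $n$ does with $\alpha=\sigma_{\underline{\alpha}}(n)/n^{\bar{\alpha}}$, so $N^\star(x)=\lfloor x\rfloor$.

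The gap is in your second step. Knowing only that, for each \emph{fixed} rational $r$, the set $\lbrace n:\sigma(n)/n=r\rbrace$ has density zero does not let you conclude $N(x)=o(x)$ by a union bound over the admissible rationals, because that family grows with $x$ (even if only polylogarithmically) and the implied $o(\cdot)$ depends on $r$. Your proposed dyadic decomposition does not repair this: inside a single block the family of admissible rationals still grows with $x$, so you are still summing non-uniform error terms over an unbounded index set. What you actually need is a bound uniform in $r$, and one is available: the Hornfeck--Wirsing theorem gives $\#\lbrace n\le x:\sigma(n)=rn\rbrace\le x^{c/\log\log x}$ with $c$ absolute and the estimate uniform over all $r$. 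With that in hand, the union over the $O((\log x)^2)$ admissible pairs $(\alpha_1,\alpha_2)$ with $\max(\alpha_1,\alpha_2)\le\omega(n)\ll\log x/\log\log x$ immediately yields $N(x)\le x^{o(1)}=o(x)$, and no dyadic splitting is needed. So the architecture of your argument is right and the conclusion is true, but as written the uniformity step is a genuine hole; replace the pointwise density-zero citation by the uniform theorem.
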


\begin{proposition}
Let $N^{\lfloor  \rfloor}(x)$, $N^{\lceil  \rceil}(x)$ and $N^\star(x)$ respectively count the number of strongly floored, strongly ceiled and partial alpha numbers that do not exceed real $x$. Then $N^\star(x) \gg N^{\lfloor  \rfloor}(x)$ and $N^\star(x)\gg N^{\lceil  \rceil}(x)$ for a sufficiently large $x$.
\end{proposition}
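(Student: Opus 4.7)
The plan is to establish both comparisons $N^\star(x) \gg N^{\lfloor\rfloor}(x)$ and $N^\star(x) \gg N^{\lceil\rceil}(x)$ by exhibiting an injection $n \mapsto n$ from the strongly floored (respectively ceiled) alpha numbers into the partial alpha numbers. Since $\gg$ is interpreted as asymptotic dominance up to a positive multiplicative constant, any inclusion of the form $N^\star(x) \ge N^{\lfloor\rfloor}(x)$ immediately yields the stated conclusion; the real work is therefore to identify, for each strongly floored or ceiled alpha number $n \le x$, a matching partial alpha number at the same integer $n$.

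First I would treat the case of integer order $(\underline{\alpha},\bar{\alpha})$, which is the natural setting in view of Theorem~\ref{t1}. When $\underline{\alpha}$ and $\bar{\alpha}$ are non-negative integers, both $\sigma_{\underline{\alpha}}(n)$ and $n^{\bar{\alpha}}$ are positive integers, so the floor in (2) and the ceiling in (3) act as the identity. Any $n$ satisfying (2) or (3) with coprime $\alpha_1,\alpha_2$ and $2 \le \max(\alpha_1,\alpha_2) \le \omega(n)$ therefore automatically satisfies (1) with the same $\alpha_1,\alpha_2$, placing it in $@^\star_{(\underline{\alpha},\bar{\alpha});\mathbb{N}}$. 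The identity map then gives $N^\star(x) \ge \max\bigl(N^{\lfloor\rfloor}(x),\ N^{\lceil\rceil}(x)\bigr)$ for all sufficiently large $x$.

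For genuinely non-integer (or quaternionic) orders, where the floor and ceiling in (2) and (3) are no longer identities, I would fall back on the surrogate order $(0,0)$: every positive integer $n$ satisfies $\sigma_0(n) = \tau(n) = \tau(n)\cdot n^0$, and so is a partial alpha number with ratio $\tau(n)/1$. This forces $N^\star(x) = \lfloor x \rfloor$, which trivially dominates $N^{\lfloor\rfloor}(x)$ and $N^{\lceil\rceil}(x)$, both of which are constrained by $\max(\alpha_1,\alpha_2) \le \omega(n) = O(\log n / \log\log n)$. In either case the injection $n \mapsto n$ produces the required $\gg$.

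The hard part will be the slight ambiguity in the statement regarding whether the three counts are taken over a fixed order or over all admissible orders simultaneously, and whether the partial-alpha witness for a given $n$ must share the order of its floored or ceiled counterpart. Under the first reading the Step 1 argument suffices; under the second reading the surrogate-order construction in Step 2 does, but only once one has checked that $@^\star$ allows the order to differ from that of $n$'s original floored/ceiled identity. A rigorous write-up would fix one convention, verify that the surrogate-order passage respects it, and extract an explicit constant $c > 0$ for the $\gg$ relation; in either case the structural reason for the dominance is the absence in the partial class of the severe $\omega(n)$-bottleneck built into the strongly floored and strongly ceiled definitions.
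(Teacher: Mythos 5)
The paper states this proposition explicitly \emph{without proof} (``we formally include the following propositions without proof''), so there is no argument of the author's to compare yours against; your proposal has to be judged on its own. On that basis it is essentially sound, and the structural point you close with --- that the partial class carries no $\omega(n)$-type bottleneck while the strongly floored and ceiled classes do --- is surely the intended reason the author felt no proof was needed. Two remarks. First, your detour through the surrogate order $(0,0)$ in Step 2 is unnecessary and is what creates the order-matching ambiguity you then have to worry about: since a partial alpha number of order $(\underline{\alpha},\bar{\alpha})$ only requires $\sigma_{\underline{\alpha}}(n)=\alpha n^{\bar{\alpha}}$ for \emph{some} $\alpha\in\mathbb{H}$, and $n^{\bar{\alpha}}=\exp(\bar{\alpha}\log n)$ is invertible in the division ring $\mathbb{H}$ for every $n\ge 1$, one may simply take $\alpha:=\sigma_{\underline{\alpha}}(n)\,(n^{\bar{\alpha}})^{-1}$; hence \emph{every} positive integer is a partial alpha number of the \emph{same} fixed order, $N^\star(x)=\lfloor x\rfloor$, and both inequalities $N^\star(x)\ge N^{\lfloor\rfloor}(x)$ and $N^\star(x)\ge N^{\lceil\rceil}(x)$ follow at once under either convention. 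Second, this settles the claim only under the standard Vinogradov reading of $\gg$ (a one-sided bound with an implied constant, here $c=1$); if the author intends the informal reading ``far more numerous,'' you would additionally need $N^{\lfloor\rfloor}(x)=o(x)$ and $N^{\lceil\rceil}(x)=o(x)$, which your write-up gestures at via the constraint $\max(\alpha_1,\alpha_2)\le\omega(n)$ but does not establish. Your Step 1 (floor and ceiling are identities on the integers $\sigma_{\underline{\alpha}}(n)$ and $n^{\bar{\alpha}}$ for integer orders) is correct as far as it goes, but it is subsumed by the direct observation above.
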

\begin{proposition}
Let F-alpha number, be any alpha number (strong, weak or very weak) such that the ordered pair $(\alpha_1, \alpha_2)$ in the frame $\alpha_2[\sigma_{\underline{\alpha}}(n)] = {\alpha_1}[n^{\bar{\alpha}}]$ is a pair of invertible functions $(\mathfrak{f}_{\underline{\alpha}}(n), (\mathfrak{f}_{\bar{\alpha}}(n))$, then with $\mathfrak{f}_{\underline{\alpha}}(n)$ and $\mathfrak{f}_{\bar{\alpha}}(n)$ as secret keys and $n$ itself a product of public primes, a more secured encryption and decryption can be made feasible.
\end{proposition}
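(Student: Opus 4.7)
The plan is to exhibit an explicit two-party protocol built directly on the functional form $\mathfrak{f}_{\bar{\alpha}}(n)\cdot \sigma_{\underline{\alpha}}(n) = \mathfrak{f}_{\underline{\alpha}}(n)\cdot n^{\bar{\alpha}}$ and then argue that invertibility of the pair $(\mathfrak{f}_{\underline{\alpha}}, \mathfrak{f}_{\bar{\alpha}})$ is exactly what allows decryption, while withholding both functions from an adversary makes recovery infeasible. In effect the proposition will be established as an existence-by-construction statement: I will name a scheme, verify correctness, and then indicate a security hypothesis under which the scheme meets the stated guarantee.

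First I would fix the public parameters: a modulus $n=\prod_{i=1}^{\omega(n)} p_i^{n_i}$ whose prime factorization is publicly known, so that both $\sigma_{\underline{\alpha}}(n)$ and $n^{\bar{\alpha}}$ can be computed by anyone, together with the domains on which the keys $\mathfrak{f}_{\underline{\alpha}}$ and $\mathfrak{f}_{\bar{\alpha}}$ act. Next I would describe the encryption map: a plaintext $m$ (encoded as an integer in the common domain of the two keys) is sent to the ciphertext $c = \mathfrak{f}_{\underline{\alpha}}(m)\cdot m^{\bar{\alpha}}$ or, equivalently when $m$ is an F-alpha number, to $c = \mathfrak{f}_{\bar{\alpha}}(m)\cdot \sigma_{\underline{\alpha}}(m)$; the two expressions coincide by the very definition of an F-alpha number, and this redundancy is what allows either secret key to serve as a decryption routine.

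Third, I would verify correctness. Since by hypothesis the pair $(\mathfrak{f}_{\underline{\alpha}}, \mathfrak{f}_{\bar{\alpha}})$ is invertible, a receiver equipped with both keys can divide $c$ by the public value $m^{\bar{\alpha}}$ (computable because the primes dividing $n$ are public) to recover $\mathfrak{f}_{\underline{\alpha}}(m)$, and then apply $\mathfrak{f}_{\underline{\alpha}}^{-1}$ to obtain $m$; the dual route using $\sigma_{\underline{\alpha}}$ and $\mathfrak{f}_{\bar{\alpha}}^{-1}$ provides a consistency check. Feasibility of both steps is immediate from the invertibility hypothesis and from the fact that $\sigma_{\underline{\alpha}}$ and exponentiation are computable in polynomial time once the factorization of $n$ is known.

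The main obstacle will be the security claim. Functional correctness is essentially a tautology once $(\mathfrak{f}_{\underline{\alpha}}, \mathfrak{f}_{\bar{\alpha}})$ are invertible, but exhibiting a concrete family of key functions for which recovering $\mathfrak{f}_{\underline{\alpha}}, \mathfrak{f}_{\bar{\alpha}}$ from $c$ and the publicly computable values $\sigma_{\underline{\alpha}}(n),\, n^{\bar{\alpha}}$ is plausibly hard is delicate: instantiating the keys as multiplication by secret scalars collapses the problem to an elementary linear one, so the proposition really requires drawing $\mathfrak{f}_{\underline{\alpha}}$ and $\mathfrak{f}_{\bar{\alpha}}$ from a suitably nonlinear family, for example one-way trapdoor permutations of the ambient residue ring. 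I would therefore present the proposition as a reduction: assuming the existence of a one-way trapdoor permutation pair compatible with the F-alpha relation, the scheme above is a correct and nontrivially secure encryption. An unconditional statement I would flag as open, and note that even the reduction version requires one to verify that the functional constraint imposed by being an F-alpha number does not itself leak enough information about $(\mathfrak{f}_{\underline{\alpha}}, \mathfrak{f}_{\bar{\alpha}})$ to defeat the trapdoor assumption, which is the subtle point any careful write-up would have to address.
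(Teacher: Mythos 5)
There is nothing in the paper to compare your attempt against: Proposition 3.13 is one of three propositions the author explicitly states ``without proof,'' and the claim itself (``a more secured encryption and decryption can be made feasible'') is not a precise mathematical assertion. So any evaluation has to rest on whether your construction is internally coherent, and there it has a concrete defect. Your decryption step asks the receiver to ``divide $c$ by the public value $m^{\bar{\alpha}}$,'' justified by the fact that the primes dividing $n$ are public --- but $m$ is the plaintext, not the modulus $n$, and knowing the factorization of $n$ gives no way to compute $m^{\bar{\alpha}}$ when $m$ is precisely the unknown being recovered. As written the decryption routine is circular. Relatedly, the proposition defines the keys as functions of $n$ (the F-alpha number itself), with the pair $(\alpha_1,\alpha_2)$ instantiated as $(\mathfrak{f}_{\underline{\alpha}}(n),\mathfrak{f}_{\bar{\alpha}}(n))$; your scheme instead applies the keys to an arbitrary plaintext $m$ and then invokes the F-alpha identity ``when $m$ is an F-alpha number,'' a hypothesis that will not hold for generic plaintexts, so the equivalence of your two ciphertext expressions is not available.

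You are right, and commendably candid, that the security half of the claim cannot be proved without importing an external hardness assumption, and that a scalar instantiation of the keys trivializes the problem. But that honesty does not close the gap: even granting a trapdoor assumption, the correctness argument above does not go through, so the proposal does not yet establish the proposition even in the conditional, reduction-style form you propose. A repaired version would need to (i) make the encrypted object and the roles of $n$ versus $m$ unambiguous, (ii) give a decryption procedure that uses only quantities the receiver actually possesses, and (iii) state the security guarantee as a formal reduction rather than an aspiration --- at which point one would really be supplying the missing mathematical content of the proposition rather than proving what the paper asserts.
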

 \section{  \textbf{@- numbers below $10^5$}}
Here, we first obtain the even strong alpha numbers of order $(1,1)$ below $10^5$. In order to achieve this goal, we shall make use of an approach similar in nature to that of Ore \cite{ore2} and Garcia \cite{Ga}. By the fundamental theorem of arithmetic and definition of alpha numbers, it is necessary to consider $$n=2^\lambda x <10^5 : x = p_2^{n_2}p_3^{n_3}\cdots p_{\omega(n)}^{n_{\omega(n)}}$$  in the frame \begin{equation}
\alpha_2=\alpha_1\cdot \frac{n}{\sigma(n)}, \ \ 2 \le \max (\alpha_2, \alpha_1 )\le\omega(n),
\end{equation} where $p_2, p_3, ..., p_{\omega(n)}$ are distinct odd primes. By virtue of Lemmas 2, 3, 4, and 5, $\lambda \leq 13$ with $\omega(2^{13}x <10^5)< 4$. So we consider the possibility $2^{13}\mid n$ in (2): $$\alpha_2=\alpha_1\cdot \frac{2^{13}x}{3\cdot 43\cdot 127\sigma(x)}.$$ 
But again, the fact that $\alpha_1\leq \omega(2^{13}x <10^5)<127$ strictly implies that $127$ must divide $n$, and further implies that $n\geq 2^{13}\cdot 127>100000$, a contradiction. Thus, there is no even strong alpha number of order $1$ divisible by $2^{13}$. Continuing this way, the first $\lambda$ such that $2^\lambda\mid n < 100000$ is $6$, this gives us $$\alpha_2 =\alpha_1\cdot \frac{2^{6}x}{127\sigma(x)}.$$ Then, since $\alpha_1\leq \omega(2^{6}x<10^5)<127$ and $2^6\cdot 127^2 > 100000$, it is only possible that $127\mid n$ such that $$\alpha_2 =\alpha_1\cdot \frac{2^{6}\cdot 127y}{127\cdot 2^7\sigma(y)}=\alpha_1\cdot\frac{y}{2\sigma(y)}.$$
Now we are left with two possibilities: either integer $y=1$ such that strictly $n=2^{6}\cdot 127,$ $\alpha_1=2$ and $\alpha_2=1$, or odd $y\ge 3$, $n>2^{6}\cdot 127,$ and $2\mid \alpha_1$. The later case implies that $\alpha_1$ satisfies the following: 
\begin{align*}\label{E:1}
\alpha_1
&=2\alpha_2\cdot \frac{\sigma(y)}{y} : \alpha_2\in\mathbb{N},\\
\alpha_1
& =2t , \ t=1, 2, 3, ...,\\
\\
\alpha_1
& <\omega(n=2^6 \cdot 127y<10^5)< 4, \ \textit{since} \ 2^6\cdot 127\cdot 3\cdot 5 >10^5.\\
\end{align*}
Thus implies that $\alpha_1\not>2$. Hence $y=1$, and so $n=2^{6}\cdot 127$ is an alpha number with $\alpha_1=2$ and $\alpha_2=1$. Repeating the above procedure for $\lambda = 5, 4, 3, 2$ and $\lambda=1$ yields the following even strong alpha numbers of order $(1,1)$.\\
\\
\begin{tabular}{|c|c|c|c|c|}
\hline 
$@_{(1,1)}$ & $\sigma(n)$ & $\alpha_1$ & $\alpha_2$ & $\omega(n)$ \\  
\hline 
$n = 6=2\cdot 3$& 12 & 2 & 1 & 2 \\ 
\hline 
$n = 28=2^2\cdot 7$& 56 & 2 & 1 & 2 \\ 
\hline 
$n = 120 =2^3\cdot 3\cdot 5$ & 360 & 3 & 1 & 3 \\ 
\hline
$n = 496=2^4\cdot 31$ & 992 & 2 & 1 & 2 \\ 
\hline
$n = 672=2^5\cdot 3\cdot 7$ & 2016 & 3 & 1 & 3 \\
\hline
$n = 1090=2^3\cdot 3\cdot 5\cdot 7\cdot 13$ & 40320 & 4 & 1 & 5 \\  
\hline
$n = 8128=2^6\cdot 127$ & 16256 & 2 & 1 & 2 \\
\hline
$n = 30240=2^5\cdot 3^3\cdot 5\cdot 7$ & 120960 & 4 & 1 & 4 \\  
\hline
$n = 32760 =2^3\cdot 3^2\cdot 13\cdot 7\cdot 5$ & 131040 & 4 & 1 & 5 \\ 
\hline
\end{tabular} 
\begin{flushleft}
Table 3.
\end{flushleft}
\textbf{Remark}: We observed that all the strong even alpha numbers between $1$ and $100000$ as recorded in the above table are purely multiply perfect numbers, so the first non-multiply perfect strong even alpha number will be $\ge 10^5$, and will be an interesting example of alpha numbers.
\\ 
\textbf{Odd alpha numbers of order $(1,1)$ below $10^5$.}
\begin{theorem}
There is no strong odd alpha number of order $(1,1)$ below $10^5$.
\end{theorem}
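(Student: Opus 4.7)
The plan is to mirror the peeling argument used for the even case, but anchored on the smallest odd prime factors rather than on the power of $2$. First, because $3\cdot5\cdot7\cdot11\cdot13\cdot17=255255>10^5$, any odd $n<10^5$ satisfies $\omega(n)\le 5$, so the strong condition $2\le \max(\alpha_1,\alpha_2)\le\omega(n)$ forces $\max(\alpha_1,\alpha_2)\in\{2,3,4,5\}$. Next, I would sharpen Theorem 3.8 for odd $n$ by the elementary product bound
$$\frac{\sigma(n)}{n}\le\prod_{p\mid n}\frac{p}{p-1}\le\frac{3}{2}\cdot\frac{5}{4}\cdot\frac{7}{6}\cdot\frac{11}{10}\cdot\frac{13}{12}<2.61,$$
so $\alpha_1/\alpha_2<2.61$. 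Since $\alpha_2\mid n$ by coprimality, $\alpha_2$ must be odd, hence $\alpha_2\in\{1,3,5\}$, and combined with $\alpha_1/\alpha_2>1$ and the bound above, the only candidate pairs are $(\alpha_1,\alpha_2)\in\{(2,1),(4,3),(5,3)\}$.

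Second, I would strip away the small-$\omega$ cases using the structural theorems already established. Theorem 3.2 disposes of $\omega(n)=1$. Theorem 3.5, combined with the classical fact that no odd perfect number lies below $10^5$ (indeed, much larger bounds are known), disposes of $\omega(n)=2$. Theorem 3.3 forbids a square-free $n$, forcing some prime to appear with exponent $\ge 2$. This leaves $\omega(n)\in\{3,4,5\}$ with at least one squared prime, together with a specific required divisibility by $3$ in the $(4,3)$ and $(5,3)$ cases.

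Third, for each surviving pair I would run the Garcia--Ore-style peeling used in the even case. Writing $n=3^\lambda y$ with $\gcd(y,3)=1$, Theorem 3.2 ensures $y>1$, and the inequality $3^\lambda\cdot 5\cdot 7<10^5$ caps $\lambda$ at a small value (similarly $\lambda=0$ is handled by forcing $5\mid n$ etc.). For each admissible $\lambda$, the relation $\alpha_2\sigma(3^\lambda)\sigma(y)=\alpha_1\cdot 3^\lambda y$ together with $\gcd(\sigma(3^\lambda),3)=1$ forces $\sigma(3^\lambda)\mid \alpha_1 y$, introducing new prime factors of $y$ that are severely constrained by the size bound $n<10^5$ and by $\omega(n)\le 5$. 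Repeating for the next odd prime in the factorisation of $y$, and using the observation from Theorem 3.9 that further limits squared prime factors when $\omega$ is small, reduces each branch to a finite check of a handful of explicit candidates, which either produce $n\ge 10^5$ or fail (\ref{eqn1}).

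The main obstacle is bookkeeping: unlike the even case, where a single base $2$ is peeled, here we must branch over several small odd-prime bases and track how each $\sigma(p_i^{n_i})$ forces a new prime into $n$ while keeping $\omega(n)\le 5$ and $n<10^5$. The combinatorial spread is modest because the ratio bound $\alpha_1/\alpha_2<2.61$ and the size bound collaborate to force exponents to be tiny almost immediately, but writing out the branches systematically for the three ratio cases $(2,1)$, $(4,3)$, $(5,3)$ is the delicate part of the argument.
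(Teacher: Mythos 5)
Your plan runs on the same engine as the paper's proof---the Ore--Garcia peeling in which $\sigma(p^{a})$ must divide $\alpha_1\cdot(n/p^{a})$, forcing new primes into $n$ until either the bound $10^5$ is breached, the factor $2$ is forced into an odd number, or a prime smaller than the assumed least one appears---but it is organised genuinely differently. The paper anchors everything on the smallest exact prime power $p_\star^{\lambda_\star}$ dividing $n$, with $p_\star\in\{3,5,7,11,13\}$, and tabulates all resulting seeds and virtual alpha numbers (Tables 4--7); it never classifies the admissible ratios. You instead first pin down $(\alpha_1,\alpha_2)$: since $\omega(n)\le 5$ for odd $n<10^5$, the bound $\sigma(n)/n<\frac{3}{2}\cdot\frac{5}{4}\cdot\frac{7}{6}\cdot\frac{11}{10}\cdot\frac{13}{12}<2.61$ together with $\alpha_2\mid n$ (hence $\alpha_2$ odd) and $\max(\alpha_1,\alpha_2)\le 5$ leaves only $(2,1)$, $(4,3)$ and $(5,3)$. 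This reduction is correct and buys more than your closing paragraph suggests: $(4,3)$ forces $3\mid n$, and then $\sigma(n)/n\ge\sigma(3^{\lambda})/3^{\lambda}$ already exceeds $4/3$ for $\lambda\ge 2$, while for $\lambda=1$ equality would force $n=3$, excluded by Theorem 3.2; $(5,3)$ collapses after two or three divisibility steps; and $(2,1)$ says exactly that $n$ is an odd perfect number. So your route is shorter and cleaner than the paper's table enumeration.

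Two soft spots remain. Your dismissal of $\omega(n)=2$ via Theorem 3.5 is off target, since that theorem covers only $n=p_1^{\alpha}p_2$ and says nothing about $p_1^{a}p_2^{b}$ with $a,b\ge 2$; the repair is immediate, because $\omega(n)=2$ forces $(\alpha_1,\alpha_2)=(2,1)$, i.e.\ $n$ perfect. More substantively, the whole case $(2,1)$ in your scheme rests on the external computational fact that no odd perfect number lies below $10^5$---a legitimate citation (Ochem--Rao is already in the bibliography) but one the paper's seed tables are designed to re-derive internally---and the decisive finite verification for the surviving branches is described rather than executed. As outlined above it terminates in a handful of lines, but a complete proof must actually write those lines out.
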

Our strategy here is to make the above method for obtaining even strong alpha numbers rigorous, and to achieve this goal, we first define $\omega_B(x):= \omega(x <B)$, and then continue by introducing the following function to conveniently extract the prime-power divisors of any positive integer, say $y$, bounded by $B>0$:\\
\begin{align*}
{\mathit{f}}_i(y)|_B
&:=\left\{
	\begin{array}{ll}
		y, \ \emph{ if } \ y \ \emph{is} \ \emph{prime} \ \emph{ and } \ y > {\omega_B(yx)}.\\
		\\
p_j^{n_j}: j= (i-1) + k, \ p_k = \min (p_1,p_2,...,p_{\omega(y)}) > {\omega_B(p_i^{n_i}x)} \ \emph{where} \ y=\prod_{i=1}^{\omega(x)} p_i^{n_i} \\ \emph{with} \ p_i < p_{i+1} \ \forall \ i<\omega(x).\\
		\\
		1, otherwise.\\
	\end{array}
\right.\\
\end{align*}

Then, we define the special integers that are potential alpha numbers not greater than bound $B>0$, in that they satisfy certain conditions of alpha numbers and can further be tested. These set of integers shall henceforth be regarded as `\textit{virtual alpha number}'.\\
\\
\textbf{Definition.} Let `$\mid\mid $' be such that $p^y\mid\mid x\Rightarrow p^y\mid x$ but $p^{y+1}\nmid x$. Further let $p_{\star}$ be prime such that $p_{\star}^{\lambda_\star}\mid \mid u\in\mathbb{N}\Rightarrow u\mid \bar{n}\in\mathbb{N}$, and if another prime $p_i\mid \bar{n}$, prime $p_\star<p_i$, then, we say that $$u=p_{\star}^{\lambda_\star}\cdot \prod_ {i\in I=\lbrace 1, ... , k\rbrace}({\mathit{f}}_i(\sigma(p_{\star}^{\lambda_\star})))^{\alpha_i}\le B$$ 
is the alpha seed of length $k$ of \textit{virtual alpha number} $\bar{n}= u\prod_{({f}_j(\sigma(u)),u)=1} {\mathit{f}}_j(\sigma(u))$ bounded by $B$ where $p_{\star}^{\lambda_\star}$ is the \textit{generator} of $u$.\\
\\
Thus alpha number $n$ relates directly with virtual alpha number $\bar{n}$, in one and only one way, as follows: $n\ge\bar{n}$. Hence, every $n<\bar{n}$ is clearly not an alpha number.
Thus (regarding every virtual alpha number $\bar{n}$ that make alpha number $n$ $:n=\bar{n}\prod p$ as being transformed), the following characteristic function is necessary and sufficient to determine the virtual alpha numbers $\bar{n}$ bounded by real $B>0$ that transformed to alpha number $n$. 
 \begin{align*}
\chi_\alpha(\bar{n})
&:=\left\{
	\begin{array}{ll}
		0; \ \emph{if it is necessary that} \  p_i^k\mid n \ \emph{where} \ p_i<p_\star \ \emph{and} \  k\ge 1, \ \emph{or if} \ \bar{n}>B.\\
		\\
		1; \ \emph{if} \ \emph{otherwise}.\\
	\end{array}
\right.
\end{align*} 
\\
\textbf{Proof of Theorem 4.1}
\begin{proof} On the contrary, suppose such odd alpha number $n<10^5$ exists. Then, by virtue of Theorem 3.2 to Lemma 3.5, there exists alpha seeds of generator $p_\star^{\lambda_\star}$ with $p_\star\in\lbrace 3, 5, 7, 11, 13\rbrace$, since $10^5<17^2\cdot 19\cdot 23< 17^2\cdot 19^3 <17\cdot 19^2\cdot 23 < ...$ Thus, we have the following table:
\\
\begin{center}
\begin{tabular}{|c|c|c|c|c|c|}
\hline 
$p_\star$ & $3$ & $5$&$7$ &$11$ &$13$ \\ 
\hline 
$\lambda_\star$ & $7$  & $4$ & $3$ &$2$ &$2$\\ 
\hline 
$\omega_{10^5}(p_\star^{\lambda_\star}x)$ & $3$ & $3$ & $3$ &$3$ &$3$ \\ 
\hline 

\end{tabular}
\\ 
\begin{flushleft}
Table 4.
\end{flushleft}
\end{center} 
To make our work easier, we first obtain the sum of divisors of some higher powers of primes that are involved in our investigation: 
\begin{center}
\begin{tabular}{|c|c|}
\hline 
$x$ & $\sigma(x)$ \\
\hline 
$3^7$ & $2^4\cdot 541$ \\  
\hline 
$3^6$ & $1093$ \\ 
\hline 
$3^5$ & $2^2\cdot 7\cdot 13$ \\ 
\hline 
$3^4$ & $11^2$ \\ 
\hline 
$3^3$ & $2^3\cdot 5$ \\ 
\hline 
$3^2$ & $13$ \\ 
\hline 
$5^5$ & $3^3\cdot 7\cdot 31$ \\ 
\hline 
$5^4$ & $11\cdot 71$ \\ 
\hline 
$5^3$ & $2^2\cdot 3\cdot 13$ \\ 
\hline 
$5^2$ & $ 31$ \\ 
\hline 
$7^4$ & $2801$ \\ 
\hline 
$7^3$ & $2^4\cdot 5^2$ \\ 
\hline 
$7^2$ & $3\cdot 19$ \\ 
\hline 
$11^2$ & $7\cdot 19$ \\ 
\hline 
$13^3$ & $2\cdot 5\cdot 7\cdot 17$ \\ 
\hline 
$13^2$ & $3\cdot 61$ \\ 
\hline 
$19^2$ & $3\cdot 127$\\ 
\hline 
$31^2$ & $3\cdot 331$\\ 
\hline 
\end{tabular} 
\begin{flushleft}
Table 5.
\end{flushleft}
\end{center}
Then we construct, using Table 5 and equation (3), a consequence of (1),the strong alpha seeds of every odd integer $<10^5$.\\ In the table, we denote the set of alpha seeds of length $2$ and bound $B>0$, by \textit{$\mathit{U}(p_{\star}^{\lambda_\star}, B, 2):=\lbrace u: u<B \ \emph{is an alpha seed}\rbrace.$ Note also that the letters $n$ and $nn$ within the table mean `none' and `not necessary' respectively}\\
\begin{center}
\begin{tabular}{|c|c|c|c|c|c|c|}
\hline 
 $p_{\star}^{\lambda_\star}$ & ${f}_1(\sigma(p_{\star}^{\lambda_\star}))=s_1$& ${f}_2(\sigma(p_{\star}^{\lambda_\star}))=s_2$ &${f}_1(\sigma(s_1))$ &${f}_2(\sigma(s_1))$& ${f}_1(\sigma(s_2))$ & $\mathit{U}(p_{\star}^{\lambda_\star}, 10^5)$ \\ 
\hline 
  $3^7$ & $541$& $nn$ &$nn$ &$nn$ &$nn$ & $\lbrace \rbrace$ \\ 
\hline 
 $3^6$& $1093$ & $nn$ &$nn$ &$nn$ &$nn$ &$\lbrace \rbrace$ \\ 
\hline 
 $3^5$& $7$ & $13$ & $n$ &$n$ &$7$ & $\lbrace 3^5\cdot 7\cdot 13\rbrace$ \\ 
\hline 
 $3^4$& $11^2$ & $n$ & $7$ & $19$ & $nn$ & $\lbrace \rbrace$ \\ 
\hline 
 $3^3$& $5$ & $n$ &$n$ &$n$ &$n$ & $\lbrace 3^3\cdot 5, 3^3\cdot 5^2, $ \\ 

   & &  & & & & $ 3^3\cdot 5^3, 3^3\cdot 5^4, $ \\
  
  & &  & & & & $  3^3\cdot 5^5 \rbrace$ \\
\hline 
 $3^2$& $13$ & $n$ & $7$ &$n$ &$n$ &$\lbrace 3^2\cdot 13\cdot 7, $ \\

   & &  & & & & $ 3^2\cdot 13\cdot 7^2, $ \\ 
 
   & &  & & & & $ 3^2\cdot 13\cdot 7^3, $ \\ 
 
   & &  & & & & $ 3^2\cdot 13^2\cdot 7, $ \\
 
   & &  & & & & $ 3^2\cdot 13^2\cdot 7^2 \rbrace$ \\
\hline 
$^\star 3$ & $n$ &$n$  &$n$ &$n$ & $n$&  \\  
\hline 
$5^4$& $11$ & $71$ & $n$& $n$& $n$& $\lbrace \rbrace$ \\

\hline 
 $5^3$& $13$ & $n$ & $7$ &$n$ & $n$& $\lbrace5^3\cdot 13\cdot 7, $\\

 & &  & & & & $  5^3\cdot 13\cdot 7^2\rbrace$ \\  
\hline 
 $5^2$& $31$ & $n$ & $n$&$n$ &$n$ & $\lbrace5^2\cdot 31, 5^2\cdot 31^2 \rbrace$ \\ 
\hline 
 $^\star 5$& $n$ & $n$ &$n$ &$n$ &$n$ & \\ 
\hline 
 $7^3$& $5^2$ & $n$ & $11$ & $n$ &$nn$ & $\lbrace  \rbrace$ \\ 
\hline 
 $7^2$& $19$ & $n$ & $5$ & $n$ & $n$& $\lbrace  \rbrace$ \\ 
\hline 
 $^\star 7$ & $n$& $n$ &$n$ &$n$ &$n$ &  \\ 
\hline 
 $11^2$& $7$ & $19$ &$n$ & $n$& $5$& $\lbrace  \rbrace$ \\ 
\hline 
 $^\star 11$ & $n$& $n$ &$n$  &$n$ &$n$ & \\ 
\hline 
 $13^2$& $61$ & $n$ & $31$ &$n$ &$nn$ & $\lbrace  \rbrace$ \\ 
\hline 
 $^\star 13$ & $7$ & $n$ & $n$&$n$ &$n$ &  \\ 
\hline 
\end{tabular}
\\
\begin{flushleft}
Table 6.
\end{flushleft}
\vspace{1cm}
For the sake of small available space, we omitted column ${f}_1(\sigma(s_2))$ (which, of course, should be filled with 'nn' through). We also considered $p_\star^{\lambda_\star}$ in asterisk at the end of Table 5. 
The next table is actually meant to test the virtual alpha numbers that transformed to alpha number:\\ 
\begin{center}
 \begin{tabular}{|c|c|c|c|c|}
\hline 
 ${u}$ & $\prod_{({f}_j(\sigma(u)),u)=1} {f}_j(\sigma(u))$ & $\bar{n}=u\prod {f}_j(\sigma(u))$ & $2^m\mid n$ or $3^w\mid n$ & $\chi_\alpha(\bar{n})$ \\ 
\hline 
 $3^5\cdot 7\cdot 13$ & $$ & $$ & $m>0$ & $0$ \\ 
\hline 
  $3^3\cdot 5$ & $$ & $$ & $m>0$ & $0$ \\ 
\hline 
  $3^3\cdot 5^2$ &  & $$ & $m>0$ & $0$ \\ 
\hline 
  $3^3\cdot 5^3$ &  & $$ & $m>0$ & $0$ \\ 
\hline 
  $3^3\cdot 5^4$ & $11\cdot 71$ & $>10^5$ & $m>0$ & $0$ \\ 
\hline 
  $3^3\cdot 5^5$ & $7\cdot 31$ & $>10^5$ & $m>0$ & $0$ \\ 
\hline 
 $3^2\cdot 13 \cdot 7$  & $$ & $$ & $m>0$ & $0$ \\ 
\hline
  $3^2\cdot 13 \cdot 7^2$  & $$ & $$ & $m>0$ & $0$ \\ 
\hline
  $3^2\cdot 13 \cdot 7^3$  & $5^2\cdot31$ & $>10^5$ & $m>0$ & $0$ \\  
\hline
  $3^2\cdot 13^2\cdot 7$  & $61\cdot 31$ & $>10^5$ & $m>0$ & $0$ \\ 
\hline
  $3^2\cdot 13^2\cdot 7^2$  & $19\cdot 61\cdot 5$ & $>10^5$ & $m>0$ & $0$ \\ 
\hline  
 $5^3\cdot 13\cdot 7$ & $$ &  & $m>0$ & $0$ \\ 
\hline 
 $5^3\cdot 13\cdot 7^2$ & $19$ & $>10^5$ & $m>0$ & $0$ \\
\hline 
 $5^2\cdot 31$ & $$ & $$ & $m>0$ & $0$ \\ 
\hline 
  $5^2\cdot 31^2$ & $331\cdot83$ & $>10^5$ & $m\ge0$ & $0$ \\ 
\hline 
\end{tabular}
\\
\end{center} 
\end{center}
\begin{flushleft}
Table 7.
\end{flushleft}
\vspace{1cm}
Finally for $\lambda_\star=1$, $p_\star^{\lambda_\star}\in\lbrace 3,5,7,11,13\rbrace$. So there exist $u\in\mathit{U}(10^5, p_\star^{\lambda_\star})$ in Table 7 so that the new seed $u_\star=p_\star^{\lambda_\star}u$ with $(p_\star^{\lambda_\star},u)=1$. By our initial supposition that alpha number $n$ exists, $n=p_\star^{\lambda_\star}uz$, thus implies that $$\alpha_2=\alpha_1\frac{n}{\sigma(p_\star^{\lambda_\star})\sigma(u)\sigma(z)}.$$  But $u\prod {f}_j(\sigma(u))>10^5$ or $2\mid {\mathit{f}}(\sigma(u))$ and ${\mathit{f}}(\sigma(u))\mid n$ or $p_i\mid n$ with $p_i<p_\star$, a contradiction.
\\
\end{proof}
\section{{Conclusion and Recommendation:}}
The results of this paper, particularly, Theorems 3.2 , 3.3, 3.5 and 3.7 give a rough picture of the general form for odd @-numbers of order (1,1), if at all exist. So, in order to fully establish Conjecture 1 of this paper, it suffices to establish the case of non-square-free, non-prime-power odd $n$. This is recommended for further study (see \cite{{san1}} $\&$ \cite{{san2}} for motivation). Also note that
an in-depth study of alpha numbers can be pursued further as follows:
\begin{itemize} 
\item[(2)] Is there a general form for even alpha numbers analogous to Euclid-Euler form for even perfect numbers?
\item[(3)] What is the congruent form (properties) of odd alpha numbers analogous to Euler form for odd perfect numbers?
\item[(4)] Is every alpha number a practical number?
\item[(5)] What are the properties of odd alpha number $n$ (especially of those alpha numbers with non-rational complex order) in terms of size, the bounds (lower and upper), abundancy and etc?
\item[(6)] Is there any applicable relationship between function $\sigma_{\underline{\alpha}}(.)/(.)^{\bar{\alpha}}$ and the Riemann zeta function $\zeta(.)$?
\item[(7)] Is there any efficient and effective algorithm to generate alpha numbers?
\item[(8)] Can there be a counting function, say $C(x)$ generating the number of alpha number up to a
desired bound $x$ such that the number of alpha numbers in regular intervals, say $10^{0}-10^{3}$, $10^{3}-2\cdot  10^{3}$, $2\cdot 10^{3}-3\cdot  10^{3}$ and etc can be determined?
\item[(9)] Are the zeros of $\lfloor|\sigma_{\underline{\alpha}}(n)|\rfloor$ of complex order $\alpha$ in table 1 significant in any way?
\item[(10)] What other hidden identities of $\sigma_\alpha(.)$ function can be derived in order to solve
the problem of existence of alpha numbers?
\item[(11)] Let alpha number $n$ of order $(\underline{\alpha}, \bar{\alpha})$ with pair $(\alpha_1, \alpha_2)$ be denoted by $(\underline{\alpha}, \bar{\alpha}; \alpha_1, \alpha_2)- n$, and also let $\mathrm{T}_k^\alpha$ be a transformation on alpha numbers such that $$\mathrm{T}_k^\alpha : (\underline{\alpha}, \bar{\alpha}; \alpha_1, \alpha_2)- n \rightarrow(\underline{\alpha}, \bar{\alpha}; \alpha_1, \alpha_2)- m=n p_1\cdot p_2\cdots p_k.$$ For example, $$(1,1; 13,4)-2^9\cdot 3^2\cdot 31\cdot 11\  _{\overrightarrow{\mathrm{T}_1^\alpha}} \ (1,1; 7,2)-2^9\cdot 3^2\cdot 31\cdot 11\cdot 13  $$ $$ (1,1; 13,4)-2^9\cdot 3^2\cdot 31\cdot 11\  _{\overrightarrow{\mathrm{T}_2^\alpha}} \ (1,1; 4,1)-2^9\cdot 3^2\cdot 31\cdot 11\cdot 13\cdot 7$$ which transformed a weak alpha number $ 2^9\cdot 3^2\cdot 31\cdot 11 $ to strong alpha number $2^9\cdot 3^2\cdot 31\cdot 11\cdot 13\cdot 7.$ Then one might wish to determine how prevalent do such transformation occur and terminate.
\item[(12)] What formidable results can come forth from the following certain generalizations of alpha numbers (see  \cite{{laa}}\cite{{san1}} \cite{san2} $\&$ \cite{{wei}} for definitions, notations and motivation)?
\end{itemize}
\begin{itemize}
\item[(I)] Let any positive integer $n$ satisfying $\sigma^\alpha_{\underline{\alpha}}(n)=(\alpha_1/\alpha_2)n^{\bar{\alpha}}$ be called $\alpha$-super @-number of order $(\underline{\alpha},\bar{\alpha})$, where $\sigma^\alpha_{\underline{\alpha}}(n)=\sigma^{\alpha-1}_{\underline{\alpha}}\sigma_{\underline{\alpha}}(n)$ is the divisor function such that $\sigma^\alpha$ is the $\alpha$th iterate of $\sigma$-function and co-prime pair integral $1 \leq \max(\alpha_1, \alpha_2)\leq \omega(n)$, $\omega(n)< \max(\alpha_1, \alpha_2)\leq \tau(n)$ and $\tau(n)< \max(\alpha_1, \alpha_2) < n$ respectively for strong, weak, very weak of such number.
\item[(II)] Let any positive integer $n$ satisfying $\sigma^\star_{\underline{\alpha}}(n) =(\alpha_1/\alpha_2)n^{\bar{\alpha}}$ be called $m$-unitary @-number, where $\sigma^\star_\alpha(n)$ is the unitary integral divisor function such that unitary divisors of $n$ are used instead of positive divisors of $n$ in the computation of $\sigma_k(n)$.
\item[(III)]Let any positive integer $n$ satisfying $\sigma_{\alpha,\infty}(n)=(\alpha_1/\alpha_2)n^{\bar{\alpha}}$ be called $m$-infinitary @-number, where $\sigma_{\alpha,\infty}(n)$ is infinitary integral divisor function such that infinitary divisors of $n$ are used instead of positive divisors of $n$ in the computation of $\sigma_{\underline{\alpha}}(n)$.
\item[(IV)] Let any positive integer $n$ satisfying $\sigma_{\alpha
,e}(n))=(\alpha_1/\alpha_2)n^{\bar{\alpha}}$ be called m-exponential @-number, where $\sigma_{\alpha,e}(n)$ is the integral exponential divisor function such that exponential divisors of $n$ are used instead of positive divisors of $n$ in the computation of $\sigma_{\underline{\alpha}}(n)$.
\item[(V)] Let any positive integer $n$ satisfying $\sigma_{\underline{\alpha}}(n))=\frac{\alpha_1}{\alpha_2}(n+k)^{\bar{\alpha}}:k<\omega(n)$ be called near @-number, where $\sigma_{\alpha}(n)$ is the sum of divisor function.
\end{itemize}
 
\textbf{Acknowledgement:}
Thanks to Professor V. A. Babalola, Professor Terence Tao, and Professor J. S$\acute{a}$ndor for their timely support concerning arXiv documentation of this paper.


\begin{thebibliography}{10}
\bibitem{Ga} M. Garcia, \emph{On Numbers with integral Harmonic mean}. Amer. Math. Monthly 61 (2), 1954, pp 89-99.

\bibitem{har}G. H. Hardy, E. M. Wright, D. R. Heath-Brown, J. H. Silverman \textit{An Introduction to The Theory of Numbers}, 6th edition, Oxford at the Clarendon Press, 2009, pp 310-312.

\bibitem{laa} R. Laatch, \emph{Measuring the Abundancy of Integers, Mathematics Magazine},
 vol 59, 1986, pp 84-92.

\bibitem{rao}P. Ochem and M. Rao, \emph{Odd Perfect Numbers Are Greater than $10^{15000}$}, Math. of Computation, 81, 2012, pp 1869-1877.

\bibitem{ore1} O. Ore, \emph{Number Theory and its History}, Mc Graw Hill, first edition, 1948 pp 86-106.

\bibitem{ore2} O. Ore, \emph{On the Averages of the Divisors of a number}. Amer. Math. Monthly 55(10), 1948, pp 615-619.

\bibitem{ros} J. B. Rosser and L. Schoenfield, \emph{Approximate formulas for some functions of prime numbers}, Illinious Journal of Math. 6, 1962, pp 64-94.

\bibitem{san} J. S$\acute{a}$ndor, \emph{Remarks on the function $\phi(n)$ and $\sigma(n)$}, Seminar on Math Analysis, Babes-Bolyai, Univ, preprint No 7, 1989, pp 7-12.

\bibitem{san1}J. S$\acute{a}$ndor J. $\&$ B. Crstici, \emph{Handbook of Number Theory II}, vol 2, Kluwer Academic Publishers, 2004, pp 15-43.
 
\bibitem{san2} J. S$\acute{a}$ndor, B. Crstici $\&$ D. S. Mitrinovic, \emph{Handbook of Number Theory I}, Springer, 2006, pp 100-119.
  
\bibitem{tat}J. J. Tattersall, \emph{Elementary Number Theory in Nine Chapters}, Cambridge
University press, 1999. pp 79-146.

\bibitem{wei} P. A. Weiner, \emph{The Abundancy Ratio, A Measure of Perfection}, Mathematics Magazine, vol 73, 2000, pp 307-310.
\end{thebibliography}
\end{document}